\newtheorem{Lemma}{Lemma}[section]
\newtheorem{Theorem}{Theorem}[section]
\newtheorem{Corollary}{Corollary}[section]
\newtheorem{Conjecture}{Conjecture}[section]
\newtheorem{Observation}{Observation}[section]
\newenvironment{proof}
   {\begin{trivlist}\item[]\textbf{\bf{Proof }}\ignorespaces}
   {\qed\end{trivlist}}
\begin{document}

\begin{frontmatter}

\title{On the facial Thue choice number of plane graphs \\ via entropy compression method}

\author[focal]{Jakub Przyby\l o\fnref{PL}}
\ead{przybylo@wms.mat.agh.edu.pl}

\author[rvt]{Jens Schreyer}
\ead{jens.schreyer@tu-ilmenau.de}

\author[els]{Erika \v Skrabu\v l\'akov\'a\fnref{SK}}
\ead{erika.skrabulakova@tuke.sk}

\fntext[PL]{Research partially supported by the Polish Ministry of Science and Higher Education.}
\fntext[SK]{This work was supported by the Slovak Research and Development Agency under the contract No. APVV-0482-11 and by the grant VEGA 1/0130/12.}

\address[focal]{AGH University of Science and Technology, Faculty of Applied Mathematics, al. A. Mickiewicza 30, 30-059 Krakow, Poland}
\address[rvt]{Ilmenau University of Technology, Faculty of Mathematics and Natural Sciences, Institute of Mathematics, Weimarer Str. 25, 98693 Ilmenau, Germany}
\address[els]{Technical University of Ko\v sice, Faculty of Mining, Ecology, Process Control and Geotechnology, Institute of Control and Informatization of Production Processes, B. N\v emcovej 3, 04200 Ko\v sice, Slovakia}

\begin{abstract}
Let $G$ be a plane graph. 
A vertex-colouring $\varphi$ of $G$ is called {\em facial non-repetitive} if for no sequence $r_1 r_2 \dots r_{2n}$, $n\geq 1$, of consecutive vertex colours of any facial path it holds $r_i=r_{n+i}$ for all $i=1,2,\dots,n$. A plane graph $G$ is {\em facial non-repetitively $l$-choosable} if for every list assignment $L:V\rightarrow 2\sp{\mathbb{N}}$ with minimum list size at least $l$ there is a facial non-repetitive vertex-colouring $\varphi$ with colours from the associated lists. \\ 
The {\em facial Thue choice number}, $\pi_{fl}(G)$, of a plane graph $G$ is the minimum number $l$ such that $G$ is
facial non-repetitively $l$-choosable. \\
We use the so-called entropy compression method to show that $\pi_{fl} (G)\le c \Delta$ for some absolute constant $c$ and $G$ a plane graph with maximum degree $\Delta$. Moreover, we give some better (constant) upper bounds on $\pi_{fl} (G)$ for special classes of plane graphs.
\end{abstract}

\begin{keyword}
facial non-repetitive colouring\sep list vertex-colouring\sep non-repetitive sequence\sep plane graph\sep facial Thue choice number\sep entropy compression method

\MSC[2010] 05C10

\end{keyword}

\end{frontmatter}

\section{Introduction}

\hspace{0,4cm} In this paper we consider simple, finite, plane and undirected graphs. A finite sequence $R=r_1 r_2 \dots r_{2n}$ of symbols is called a {\em repetition} if $r_i=r_{n+i}$ for all $i=1,2,\dots,n$. A sequence $S$ is called {\em repetitive} if it contains a subsequence of consecutive terms that is a repetition. Otherwise $S$ is called  {\em non-repetitive}. Let $\varphi$ be a colouring of the vertices of a graph $G$.  We say that $\varphi$ is {\em non-repetitive} if for any simple path on vertices $v_1$, $v_2, \dots, v_{2n}$ in $G$ the associated sequence of colours $\varphi(v_1)$ $\varphi(v_2) \dots \varphi(v_{2n})$ is not a repetition. The minimum number of colours in a non-repetitive vertex-colouring of a graph G is the {\em Thue chromatic number}, $\pi(G)$.

Non-repetitive sequences have been studied since the beginning of the 20$\sp{th}$ century. In 1906 the Norwegian mathematician and number theoretician Axel Thue gave rise to a systematic study of \emph{Combinatorics on Words}, or \emph{Symbolic Dynamics}. 
Thue \cite{Th06} showed then that there are arbitrarily long non-repetitive sequences over three symbols,
a result that was rediscovered several times in different branches of mathematics, see e.g.~\cite{Allouche,Bean,Lothaire}.
Thue's result in the language of graph theory states:

\begin{Theorem}[\cite{Th06}]\label{Thue}
Let $P_n$ be a path on $n$ vertices. Then $\pi(P_1)=1$, $\pi(P_2)=\pi(P_3)=2$ and for $n>3$, $\pi(P_n)=3$.
\end{Theorem}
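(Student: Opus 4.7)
The plan is to split the verification into finite small-case checks and a general upper-bound construction. The cases $n=1,2,3$ reduce to direct inspection: a single vertex is non-repetitively $1$-colourable; any monochromatic colouring of $P_2$ already forms a repetition $r_1 r_1$ (the case $n=1$ in the definition), so $\pi(P_2)\ge 2$, with matching upper bound by the colouring $1,2$; and for $P_3$ the colouring $1,2,1$ avoids repetitions because no two consecutive colours coincide and there is no room for a factor of length four. The lower bound for all $n\ge 4$ reduces to $P_4$: a $2$-colouring with no two adjacent equal colours is forced into the pattern $abab$, which is itself a repetition with $n=2$; since $P_4$ is a subpath of every longer $P_n$, this forces $\pi(P_n)\ge 3$ for all $n\ge 4$.

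For the matching upper bound $\pi(P_n)\le 3$, it suffices to exhibit an infinite sequence $w$ over a three-letter alphabet that is non-repetitive, and then to colour $P_n$ by its first $n$ symbols. My preferred route is Thue's original construction via an iterated substitution: choose a suitable morphism $h:\{1,2,3\}^*\to\{1,2,3\}^*$ admitting a fixed point $w=\lim_{k\to\infty}h^k(x_0)$ (any of the classical choices, for instance the sequence obtained from the Thue--Morse word by recording the gap lengths between consecutive $0$'s, will do). Alternatively, and more in keeping with the method featured in the present paper, one could prove existence of a non-repetitive $3$-colouring of $P_n$ by the entropy compression method: picking colours independently and uniformly from $\{1,2,3\}$ and arguing that any sequence of random choices producing a repetition would admit an injective encoding strictly shorter than its own length, which is impossible for all $n$.

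The main obstacle is, of course, verifying that the constructed $w$ contains no repetition. The standard argument proceeds by contradiction on a minimal repetition $uu$ inside $w$: one locates $uu$ relative to the block decomposition of $w$ induced by $h$, exploits that the boundaries between consecutive $h$-images can be detected from the letters alone, aligns the two copies of $u$ so that block boundaries in the first correspond to block boundaries in the second, and then applies $h^{-1}$ to obtain a strictly shorter repetition in $w$. A bounded case analysis on $|u|$ modulo the length of the $h$-blocks, together with direct inspection of the handful of possible short repetitions at the base of the recursion, closes the argument and delivers $\pi(P_n)\le 3$ for every $n$.
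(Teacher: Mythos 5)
The paper itself offers no proof of this statement: it is quoted directly from Thue's 1906 result and used as background, so there is nothing internal to compare against. Your outline follows the classical route. The finite cases and the lower bound are complete and correct as you state them: $r_1r_1$ is a repetition with $n=1$, so adjacent vertices need distinct colours, which settles $P_2$ and $P_3$; and on $P_4$ any proper $2$-colouring is forced into $abab$, a repetition with $n=2$, while an improper one contains $aa$, so $\pi(P_n)\ge 3$ for $n\ge 4$ by taking $P_4$ as a subpath.

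The substance of the theorem, however, is the upper bound, i.e.\ the existence of arbitrarily long square-free ternary words, and this is exactly the part your proposal defers to ``the standard argument.'' That argument is genuinely delicate: for a non-uniform morphism such as $1\mapsto 123$, $2\mapsto 13$, $3\mapsto 2$ one must actually establish the synchronisation property (that block boundaries are recognisable) and carry out the case analysis on how a minimal square $uu$ overlaps the block decomposition; asserting that this ``closes the argument'' leaves the core of the proof unproven. A cleaner packaging is the standard lemma that a \emph{square-free morphism} maps square-free words to square-free words, verified for the chosen $h$ by a finite check on short factors. Your proposed alternative via entropy compression does not work as stated: the known entropy-compression analysis for nonrepetitive colourings of paths (Grytczuk--Kozik--Micek, cited as \cite{GKM10} in this paper) closes the counting only with \emph{four} colours, and with three uniform random colours the number of admissible records is not dominated by $3^T$, so that route would prove $\pi(P_n)\le 4$, not $3$. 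You should therefore either complete the morphism argument or cite Thue's construction outright, and drop the entropy-compression alternative for the value $3$.
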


Non-repetitive sequences regarding graphs occurred for the first time in 1987 (see \cite{Cur87}).
In 1993 Currie \cite{Cur02} showed that for every cycle of length $n\in\{5$, $7$, $9$, $10$, $14$, $17\}$, $\pi(C_n)=4$ and  for other lengths of cycles on at least $3$ vertices, $\pi(C_n)=3$. However, the motivation to study non-repetitive colourings of \emph{graphs} themselves comes from the paper of Alon et al. \cite{AGHR02} from 2002. The upper bound of  the Thue chromatic number  of the form $\pi(G)\leq C\Delta^2$ (where $\Delta$ is the maximum degree of $G$ and $C$ is a constant) was  for all graphs $G$ settled in \cite{Gr_new_1}. The proof was based on Lov\'asz Local Lemma, what  immediately yields the same result for the naturally defined list version of the problem, where the corresponding parameter is called the \emph{nonrepetitive choice number} and denoted by $\pi_{\rm ch}(G)$, see e.g.~\cite{Dujmovic} for details.
The constant $C$ was then pushed in several consecutive papers \cite{Gr_new_2,HaJe12} roughly down to 12.92 by means of the same tool.
Quite recently however, Dujmovi\'c et al. 
used the so-called entropy compression method to prove the following asymptotical improvement: 
\begin{Theorem}[\cite{Dujmovic}]\label{DujmobicetalTh}
For every graph $G$ with maximum degree $\Delta$,
$\pi_{\rm ch}(G)\leq (1+o(1))\Delta^2$.
\end{Theorem}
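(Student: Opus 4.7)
The plan is to set $k=\lceil(1+\varepsilon)\Delta^2\rceil$, fix an arbitrary list assignment $L$ with $|L(v)|\ge k$ for every $v$, and run the Grytczuk--Kozik--Micek style randomised algorithm analysed by entropy compression. Order the vertices $v_1,v_2,\dots,v_N$ arbitrarily and repeatedly do the following: if $v_i$ is the first uncoloured vertex, pick a colour $\varphi(v_i)$ uniformly at random from $L(v_i)$; if this creates a repetition, i.e.\ a simple path $u_1u_2\dots u_{2n}=v_i$ whose colour sequence satisfies $\varphi(u_j)=\varphi(u_{n+j})$, then uncolour $u_{n+1},\dots,u_{2n}$ (the second half, including $v_i$) and continue. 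The output is either a valid facial (here, ordinary) non-repetitive list-colouring when every vertex is coloured, or the algorithm runs forever. We must prove the latter has probability $0$.

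The core of the argument is an injection. Suppose the algorithm has performed exactly $t$ random picks. Its history is a \emph{record} consisting of the sequence of lengths $n_1,n_2,\dots,n_s$ of the backtracks together with, for each backtrack, a description of the offending path. A repetition of length $2n$ ending at the last-coloured vertex is specified by choosing the path backwards edge by edge, so there are at most $\Delta^{2n-1}$ such paths; however, crucially, the colours on the second half duplicate those on the first half, so once the algorithm's record tells us where the repetition occurred, the $n$ erased colours are already determined by the $n$ surviving ones. Thus the partial colouring and the record together determine the entire sequence of $t$ random choices -- this is the decoding step. Counting: the record occupies at most $\prod_i \Delta^{2n_i-1}$ combinatorial possibilities times a bookkeeping term polynomial in $t$ and $s$ (for the bit-pattern separating backtracks from successful colourings, e.g.\ a Dyck-like word in $\binom{t}{s}$ arrangements), while the partial colouring contributes at most $k^N$ possibilities independent of $t$.

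Comparing entropies, the $t$ random picks carry $k^t$ equally likely outcomes, so
\[
k^t \;\le\; k^N\cdot \binom{t}{s}\cdot \prod_{i=1}^{s} \Delta^{2n_i-1}
\;\le\; k^N\cdot 4^t \cdot \Delta^{2(t-N)},
\]
using $\sum n_i = t-N$ (each backtrack erases exactly the number of colours it records, while the final state has $N$ colours). Dividing by $k^t$ and recalling $k\ge(1+\varepsilon)\Delta^2$, the right-hand side decays like $\bigl(\tfrac{4}{1+\varepsilon}\bigr)^t$ up to a $t$-independent constant, which is less than $1$ once $\varepsilon$ exceeds a fixed threshold; a finer accounting (say, a more careful enumeration of the Dyck-word factor that only costs a constant base arbitrarily close to $1$) pushes the admissible $\varepsilon$ down to any positive value, yielding the $(1+o(1))\Delta^2$ bound.

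The main obstacle is the path-counting bookkeeping: the naive bound $\Delta^{2n-1}$ per repetition is too wasteful once one tries to shave the leading constant, and one must replace it by the sharper $\Delta^{2n}/k$ gain obtained by observing that the algorithm only backtracks on repetitions that were actually generated by the forbidden event, so the log of backtrack positions and the surviving colours reconstruct the erased colours exactly and no extra factor of $k^n$ is paid. Getting this accounting tight enough to obtain $(1+o(1))\Delta^2$ rather than $C\Delta^2$ is what forces the $o(1)$ slack and is the delicate point of the proof.
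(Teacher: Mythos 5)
First, a point of reference: the paper does not prove Theorem~\ref{DujmobicetalTh} at all --- it is quoted from \cite{Dujmovic} --- so the meaningful comparison is with the entropy-compression argument the paper carries out in full for Theorem~\ref{mainresult}, which is the same machinery you invoke. Your framework (randomised list-colouring, backtracking on repetitions, recovering the erased colours from the surviving half, double counting against $k^t$) is the right one, but the decisive step is missing. As you set up the count, the number of (record, partial colouring) pairs is at most $k^N\binom{t}{s}\prod_i\Delta^{2n_i-1}$, and after dividing by $k^t$ you need a quantity of the form $\bigl(c\Delta^2/k\bigr)^t$ with $c>1$ to tend to $0$; this proves only $\pi_{\rm ch}(G)\le C\Delta^2$ for some constant $C>1$, which was already known via the Local Lemma. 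Your proposed remedy --- ``a more careful enumeration of the Dyck-word factor that only costs a constant base arbitrarily close to $1$'' --- cannot work as stated: the number of Dyck-type prefixes of length $t$ genuinely grows like $2^t$ up to polynomial factors, so no sharper enumeration of that factor in isolation removes the constant. The actual content of the theorem is that the \emph{weighted} count of records (ascents weighted $1$, a descent of length $n$ weighted by the number of offending paths, roughly $n\Delta^{2n-1}$) has exponential growth rate $(1+o(1))\Delta^2$ when analysed as a single combinatorial class; the saving comes from the missing factor of $\Delta$ per descent, which suppresses records with many or long descents, and it is extracted by singularity analysis of the associated generating function (or, as in the paper's own proof of Theorem~\ref{mainresult}, by deriving an exact recurrence such as (\ref{recurrenceequation}) and locating the dominant root of its characteristic polynomial). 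You have correctly identified this as ``the delicate point'' but not supplied it, and it is the whole theorem.

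Two secondary flaws in the injection itself. You assume the repetition $u_1\cdots u_{2n}$ \emph{ends} at the newly coloured vertex $v_i$; in general $v_i$ may sit anywhere on the path, the half to be erased is the one containing $v_i$, and the record must additionally encode the position of $v_i$ on the path for the decoding to be well defined (compare the component $q$ of the triple $(h,q,o)$ in the paper's proof of Theorem~\ref{mainresult}). This costs an extra factor of order $n$ per descent --- harmless for the growth rate, but without it the map from executions to (record, colouring) pairs is not injective. Also, $\sum_i n_i=t-N$ should be $\sum_i n_i\ge t-N$: an execution that is still running after $t$ steps has \emph{fewer} than $N$ coloured vertices, and the counting must accommodate all such executions, not only those ending with everything coloured. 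Both are repairable, but as written the bookkeeping identities are false.
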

By \cite{AGHR02} this result is known to be tight up to the logarithmic factor in $\Delta$.
This bound can however be improved (in the non-list version) if we limit ourselves to special families of graphs.
It is for instance known that $\pi(T)\leq 4$ for every tree $T$, see e.g. \cite{Gr_new_2}.
Perhaps one of the most interesting open questions in this field comes from~\cite{AGHR02} and concerns determining whether $\pi(G)$ is bounded (from above) in the class of planar graphs.
Surprisingly, the answer to the same question but in the list setting is negative, even for the family of trees, see~\cite{MendezZhu}.
Various questions concerning non-repetitive colourings of graphs were formulated e.g. in ~\cite{CzGr07,Gr_new_2,Gr07}, and consequently numerous their variations have appeared in the 
literature.

We further develop the study of planar (plane in fact) graphs
and analogously as in \cite{BaCz11}, \cite{HaJe12}, \cite{HJSS09}, \cite{JeSk09}, \cite{JP12} and  \cite{JSES11}
we introduce a facial non-repetitive colouring as follows: Let $G$ be a 
plane graph. A {\em facial path} in $G$ is a path of $G$ on consecutive vertices (and edges) on the boundary walk of some face (of any component) of $G$. Let $\varphi$ be a colouring of the vertices of 
$G$.
We say that $\varphi$ is {\em facial non-repetitive} if for every 
facial path on vertices $v_1$, $v_2, \dots, v_{2n}$ in $G$ the associated sequence of colours $\varphi(v_1)$ $\varphi(v_2) \dots \varphi(v_{2n})$ is not a repetition. The minimum number of colours in a facial non-repetitive vertex-colouring of a graph $G$ is the {\em facial Thue chromatic number}, $\pi_f(G)$.
If additionally the colour of every vertex $v$ is chosen from a list $L(v)$ of admissible possibilities assigned to $v$, we speak about a {\em facial non-repetitive list colouring} $\varphi_L$ of the graph $G$ with list assignment $L$.
The graph $G$ is {\em facial non-repetitively $l$-choosable} if for every list assignment
$L:V\rightarrow 2\sp{\mathbb{Z}_+}$ (where $\mathbb{Z}_+$ - the set of positive integers, is chosen for further convenience) with minimum list size at least $l$ there is a facial non-repetitive vertex-colouring $\varphi_L$ with colours from the associated lists. The {\em facial Thue choice number}, $\pi_{fl}(G)$, of a plane graph $G$ is the minimum number $l$, such that $G$ is facial non-repetitively $l$-choosable.
Note that both these parameters, 
$\pi_{f}(G)$ and $\pi_{fl}(G)$,
are defined only for planar graphs and depend on embedding of the graph (thus for plane graphs in fact).

Harant and Jendro\v l \cite{HaJe12} asked whether there is a constant bound for the facial Thue chromatic number of all plane graphs. Recently Bar\'at and Czap 
proved the following.
\begin{Theorem}[\cite{BaCz11}]\label{BaratCzapTh}
For any plane graph $G$, $\pi_f(G)\leq 24$.
\end{Theorem}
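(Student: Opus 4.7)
My plan is to reduce to 2-connected plane graphs and then build a global vertex colouring by combining a face-colouring (via the four colour theorem applied to the dual) with Currie-type non-repetitive cycle colourings, aiming for the product bound $4\cdot 6=24$.

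First, I would argue that it suffices to prove the bound for 2-connected plane graphs. If $G$ has a bridge or a cut vertex, every facial walk is confined to a single block, so I could colour each block independently, then at each cut vertex renaming colours (using the fact that the palette is large and that a cut vertex carries only a few adjacencies in each block) to obtain a common colour consistent with both sides. In the 2-connected case, every face $F$ is bounded by a simple cycle $C_F$, and every facial path of length at least $2$ is a contiguous subpath of some $C_F$.

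Second, I would apply the four colour theorem to the planar dual of $G$ to obtain a map $\chi$ from the face set to $\{1,2,3,4\}$ such that any two faces sharing an edge receive distinct values. For each vertex $v$, I would fix a canonical incident face $F(v)$ (say, the first one clockwise from a distinguished direction), and set the first coordinate of $v$'s colour to be $\chi(F(v))$. The second coordinate would come from a Thue-type non-repetitive colouring of the cycle $C_{F(v)}$ using colours from a palette of size $6$, chosen with extra slack beyond Currie's optimal $4$ so that constraints imposed by the other faces incident to $v$ can be respected. The resulting palette has size $4\cdot 6=24$.

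The main obstacle will be showing that this assignment is facial non-repetitive. For a facial path $v_1v_2\cdots v_{2n}$ on the boundary of a face $F$, only those $v_i$ with $F(v_i)=F$ receive their second coordinate from the colouring of $C_F$; the remaining $v_i$ inherit colours from other face-palettes. One would need to verify that the mixed-source colour sequence cannot form a repetition, and I expect this to split into two cases: either a significant fraction of the $v_i$ have $F(v_i)=F$, in which case the built-in non-repetitiveness on $C_F$ rules out most repetitions, or the canonical faces along the path vary, in which case consecutive shifts of the first coordinate $\chi(F(v_i))$ preclude the structural regularity needed for a repetition. Making this rigorous—particularly bounding what can go wrong when a long run of consecutive vertices has canonical face different from $F$—is where most of the work lies, and one may need to invoke the Lovász Local Lemma on the auxiliary event of a forbidden repetition at each facial position in order to confirm that $24$ colours genuinely suffice.
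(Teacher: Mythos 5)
First, note that the paper you are working from does not prove this statement at all: Theorem~\ref{BaratCzapTh} is imported verbatim from Bar\'at and Czap \cite{BaCz11} and is used only as a reference point for the bounds in (\ref{3piEq}). So there is no in-paper proof to compare against; your proposal has to stand or fall on its own. (For the record, the argument in \cite{BaCz11} is structured quite differently from yours: it rests on a distance-from-the-outer-face layer decomposition of the plane graph, the observation that a facial path meets only few consecutive layers, and nonrepetitive colourings of the path/tree-like structures induced by the layers with disjoint palettes, the constants multiplying out to $24$. It does not pass through the Four Colour Theorem.)

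As written, your proposal has two genuine gaps. The first is the reduction step: it is not true that every facial path is confined to a single block. Take two triangles sharing a vertex $c$; the boundary walk of the outer face traverses both triangles, and a facial path such as $v_1 v_2 c u_1 u_2$ crosses the cut vertex into a second block, so a repetition can straddle $c$. Block-by-block colouring therefore needs an additional argument (e.g.\ a uniqueness device at cut vertices in the spirit of Lemma~\ref{unique}), not just a colour renaming. The second and more serious gap is the one you flag yourself: the verification that the mixed-source colouring is facial non-repetitive is entirely missing, and the construction as described gives you no handle on it. Along the boundary cycle $C_F$ of a face $F$, any vertex $v_i$ with $F(v_i)\neq F$ receives its second coordinate from a non-repetitive colouring of a \emph{different} cycle, at a position on that cycle that is uncorrelated with its position on $C_F$; and the first coordinate $\chi(F(v_i))$ is constant along any run of vertices sharing the same canonical face. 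So a run of $2n$ consecutive vertices of $C_F$ whose canonical faces follow a period-$n$ pattern is not blocked by either coordinate, and nothing in the construction forbids the second coordinates from repeating there. The closing appeal to the Lov\'asz Local Lemma cannot rescue this: your colouring is deterministic once the face $4$-colouring and the cycle colourings are fixed, there is no probability space over which to apply the lemma, and the LLL in this setting would in any case demand a palette growing with the maximum degree rather than the fixed constant $24$.
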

Whether the facial Thue choice number of plane graphs could be bounded from above by a constant is still an open question.
Note however that obviously, for every plane graph $G$,
\begin{equation} \label{3piEq}
\pi_{f}(G)\leq \pi_{fl}(G)\leq \pi_{\rm ch}(G),
\end{equation}
cf. Theorems~\ref{DujmobicetalTh} and~\ref{BaratCzapTh}.

In this paper we use the entropy compression method to show an upper bound for the facial Thue choice number of an arbitrary plane graph $G$ depending linearly on its maximum degree $\Delta$. In particular we prove 
the bound of the form $\pi_{fl}(G)\leq 5 \Delta$ and its asymptotical improvement,
$\pi_{fl}(G)\leq (2+o(1)) \Delta$, see Theorem~\ref{mainresult} in Section~\ref{main_result_section}.
We also supplement these results by
showing constant upper bounds on $\pi_{fl}(G)$ for some classes of plane graphs in Section~\ref{families_section}.

\section{A general bound for plane graphs\label{main_result_section}}

\begin{Theorem}\label{mainresult}
If $G$ is a plane graph of maximum degree $\Delta$, then: \\
$\pi_{fl}(G) \leq 5\Delta$, \\
$\pi_{fl}(G) \leq (2+o(1))\Delta$.
\end{Theorem}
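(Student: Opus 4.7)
The plan is to prove both bounds by the entropy compression method, following the template of Dujmović, Joret, Kozik, and Wood \cite{Dujmovic} (the proof of Theorem~\ref{DujmobicetalTh}), and replacing their estimate on the number of paths of length $2n$ through a vertex by the much tighter facial analogue. Fix an arbitrary linear ordering $v_1,\ldots,v_N$ of $V(G)$ and assume $|L(v)|=l$ for every $v$. At each step $t$ let $w_t$ be the smallest-indexed uncoloured vertex, pick $c_t \in L(w_t)$ uniformly at random, and tentatively set $\varphi(w_t):=c_t$. If no facial path ending at $w_t$ is now a repetition, call the step \emph{good} and advance; otherwise, by a fixed tie-breaking rule select one facial repetition $u_1 u_2 \cdots u_{2n}=w_t$ lying on a face $f$ traversed in direction $\delta\in\{1,2\}$, uncolour $u_{n+1},\ldots,u_{2n}$, and call the step \emph{bad} with parameters $(f,\delta,n)$.

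To each trajectory of length $T$ we associate a \emph{record}: the list of markers (either ``good'' or ``bad $(f,\delta,n)$'') over the $T$ steps, where $f$ is encoded by its local index among the at most $\deg(w_t)\le\Delta$ faces incident to $w_t$. The decisive property is invertibility: from the record together with the partial colouring at time $T$, the random sequence $(c_1,\ldots,c_T)$ is uniquely recoverable. Going backwards, each good step is reversed by uncolouring the unique candidate $w_t$ determined by the ordering and the current colouring, and each bad step is reversed by restoring the colours of $u_{n+1},\ldots,u_{2n}$, which by the repetition equality $r_i=r_{n+i}$ coincide with the colours of $u_1,\ldots,u_n$ still stored in the state. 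Hence, letting $k(R)$ denote the number of coloured vertices after the record $R$, the number of input sequences of length $T$ that fail to yield a complete colouring is at most $\sum_R l^{k(R)}$, summed over length-$T$ records.

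It remains to show this sum is strictly less than $l^T$ for some~$T$. Here the facial hypothesis enters crucially: the number of facial paths of any given length ending at $w_t$ is at most $2\deg(w_t)\le 2\Delta$, as such a path is specified by a choice of incident face and direction. Identifying records with generalised Dyck-type paths (up-steps for good steps, down-steps of variable size weighted by $2\Delta$ for bad steps, and forced to stay nonnegative), the generating-function methodology of \cite{Dujmovic} turns the required inequality into a sufficient condition of the form $\rho(l,\Delta)<1$, where $\rho$ is determined by an algebraic equation involving $1+\tfrac{2\Delta z}{1-z}$ with $z=l^{-1}$. Direct estimation of this equation yields the sufficient condition $l\ge 5\Delta$; a finer Lagrange-inversion optimisation near the dominant singularity sharpens the constant to $l\ge (2+o(1))\Delta$ as $\Delta\to\infty$.

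The main obstacle is the counting step itself: the naive per-step bound $(1+\tfrac{2\Delta}{l-1})^T$ always exceeds $1$ and so yields no termination criterion, and one must exploit the constraint that the number of coloured vertices stays nonnegative throughout the history to obtain a Catalan-type saving that makes the generating function converge for $l$ of order $\Delta$. A secondary technical point is that uncolouring the second half of a facial repetition may remove vertices with very small index in the ordering, so the frontier $w_{t+1}$ can jump far backwards after a bad step; this is harmless provided the decoder recognises $w_{t+1}$ as the smallest uncoloured index determined by the current state.
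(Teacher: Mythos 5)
Your overall strategy coincides with the paper's: the same colour--erase algorithm, the same record-plus-final-colouring invertibility argument, and the same reduction to counting weighted Dyck-type sequences whose growth rate must stay below $l$. However, there is a genuine gap in your handling of bad steps. You assume that every newly created facial repetition $u_1u_2\cdots u_{2n}$ satisfies $u_{2n}=w_t$, i.e.\ that the freshly coloured vertex is an \emph{endpoint} of the repetitive path, and accordingly you only test facial paths \emph{ending} at $w_t$ and encode a bad step by $(f,\delta,n)$ alone. This is false: colouring $w_t$ can complete a repetition in which $w_t$ is an interior vertex (e.g.\ a facial path $x\,w_t\,y\,z$ coloured $a\,b\,a\,b$ after $w_t$ receives colour $b$ last), and such a repetition contains no sub-repetition ending at $w_t$. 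As written, your algorithm never destroys these repetitions, so its terminating colourings need not be facial non-repetitive; alternatively, if the algorithm does destroy them, your record is not invertible, because $(f,\delta,n)$ does not tell the decoder where $w_t$ sits on the path and hence which $n$ vertices were uncoloured.

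The fix is exactly the extra coordinate the paper introduces: record also the offset $q\in[0,h-1]$ of $w_t$ to the nearer end of the path, and uncolour the half of the path containing $w_t$ (Observation~\ref{key-one-to-one-obs} then gives invertibility). This multiplies the weight of a bad step of half-length $h$ by $h$, i.e.\ the weight is $2\Delta h$ rather than $2\Delta$, which changes your generating function from one built on $\frac{2\Delta z}{1-z}$ to one built on $\sum_{h\ge1}2\Delta h\,z^{h}=\frac{2\Delta z}{(1-z)^2}$ and leads to the recurrence $a_m=3a_{m-1}+(2\Delta-3)a_{m-2}+a_{m-3}$ of the paper. The asymptotic conclusion $(2+o(1))\Delta$ survives this correction (since $h^{1/(2h)}\to1$), but the explicit constant in the first bound must be recomputed; the paper's computation with the corrected weights still yields $l\le 5\Delta$ for $\Delta\ge3$, with $\Delta\le2$ handled separately via the path and cycle bounds of Section~\ref{families_section}, a case your argument also omits.
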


\begin{proof}
Let $G=(V,E,F)$ be an arbitrarily fixed plane graph with vertex set $V$, edge set $E$ and face set $F$. Furthermore, let $L:V\rightarrow 2^{\mathbb{Z}_+}$ be its fixed list assignment, where $|L(v)|=l$ for $v\in V$.
Order (label) the vertices arbitrarily into a sequence $v_1,v_2,\ldots,v_n$,
and analogously fix any linear ordering in every list $L(v)$, $v\in V$.
By a \emph{partial colouring} of $G$ from $L$ we shall mean an assignment $c:V\to\mathbb{Z}_+\cup\{0\}$
such that $c(v)\in L(v)\cup\{0\}$ for $v\in V$, where the vertices with `$0$' assigned will be identified with non-coloured ones
(thus, e.g., a graph whose none of the vertices has a colour chosen from its list has a partial colouring with all vertices assigned with `$0$').
Thus we will only be further concerned with repetitions generated by positive integers.\\

\textbf{The algorithm} \\

Suppose we use the following randomized algorithm in order to find a desired vertex colouring.
In the first step, we choose randomly an element from $L(v_1)$, each with equal probability of $1/l$, and set it as $v_1$'s colour.
Then, in the second step, we first do the same for $L(v_2)$ and $v_2$.
If $v_1$ and $v_2$ are the consecutive vertices of a boundary walk of some face and their colours are the same, then we uncolour $v_2$ and move to the next step of the construction.
Suppose now that after $t$ steps of the algorithm no facial path of $G$ is repetitively coloured, and there are still some vertices which are not coloured. Then in the consecutive step we choose $v_i$ with the smallest $i$ among these unpainted vertices, and choose (independently and equiprobably) a random colour from $L(v_i)$ for it. If as a result a facial path $P$ which makes up a repetition is created, then we uncolour the vertices of this half of $P$ which contains $v_i$ (if more than one such $P$ appears we choose any of this according to some previously specified rule), thus `destroying' all possible repetitions (all including the colour of $v_i$) on the faces of $G$, and proceed with the next step. We continue until all vertices are painted after some step, in which case we obtain a desired colouring. \\
So that we are certain that this algorithm eventually stops, we also bound the number of admitted steps by $T$, where $T$ is some large integer (much larger than $n$). In fact it shall be clear from our further reasoning that the probability that such algorithm stops after less than $T$ steps converges to $1$ as $T$ tends to infinity.  \\

\textbf{The analysis} \\

For a fixed (large) $T$, suppose that the probability that our algorithm performs all $T$ admitted steps while trying to find an appropriate
colouring of our (fixed) graph $G$ on $n$ vertices is equal to $1$.
Since the lists $L(v)$ are linearly ordered, then picking a colour is in each of its steps equivalent to an independent random choice of an integer in $[1,l]:=\{1,2,\ldots,l\}$.
Thus there are exactly $l^T$ distinct executions of the algorithm (each equally probable).
We will show that counting these otherwise we might compress this range of varieties into a smaller extent (this explains the term ``entropy compression'' used for the name of the method applied).
In other words, we will obtain a contradiction by a double counting argument, and consequently we will prove that at least one possible execution of the algorithm must stop after less than $T$ steps, and thus yield a desired colouring of $G$.

To perform the alternative counting, suppose that at the end our algorithm returns a \emph{record} containing some information on its execution.
This will be a $T$-element vector denoted by $R$ whose every position corresponds to a single consecutive step of the algorithm.
Suppose we coloured a vertex $v$ in a given step $t$.
Then we set $R(t)=\emptyset$ if no repetition was created in this step. Otherwise, if a path $P$ was a repetition whose appropriate half of colours had to be erased, we set $R(t)=r(v,P)$, where $r(v,P)=(h,q,o)$ is defined as follows.
We set $h$ to be the size of the repetition (hence $P$ has order $2h$) and $q$ to be the distance of $v$ along $P$ to the closer end of $P$
(hence $q\in [0,h-1]$). Finally, we define $o\in[1,2\Delta]$, which corresponds to an ``orientation'' of $P$ with respect to $v$, in the following manner:
First note that for fixed $v$, $h$ and $q$ as above, there are at most $2\Delta$ \textbf{facial} paths of length $2h$ which contain $v$ and whose one end is at distance exactly $q$ from $v$ along the path. For every fixed $v$, $h$, $q$ we may thus assume that before having launched the algorithm, we had chosen an ordering of such facial paths, and thus (during an execution of the algorithm) we may set $o$ to be the index of $P$ in this ordering. Note that then, given $v,h,q$ and $o$, we may unambiguously reconstruct the unique path $P$ for which $r(v,P)=(h,q,o)$. The following is the key observation responsible for a one-to-one correspondence between two ways of counting. \\

\begin{Observation}\label{key-one-to-one-obs}
For every record $R$ and a (partial) vertex colouring $c$ of $G$ from $L$ there is at most one possible execution of the algorithm which returns $R$ and $c$.
\end{Observation}

\begin{proof}
For any execution of the algorithm resulting in creating our fixed record $R$, let $Y_1,Y_2,\ldots,Y_T$ be a sequence where $Y_t$ is the set of indices of yet not coloured vertices at the beginning of step $t$, $t\in[1,T]$, hence $Y_1=[1,n]$. By analyzing the vector $R$ from left to right, one may easily see that the sequence $Y_1,Y_2,\ldots,Y_T$ is unique (if it exists) for the fixed $R$. Indeed, for $t=1,2,\ldots,n-1$, if $R(t)=\emptyset$, then $Y_{t+1}=Y_t\smallsetminus\{j\}$ with $j=\min Y_t$. If on the other hand, $R(t)=(h,q,o)$, then we know that in step $t$ the vertex $v_j$ with $j=\min Y_t$ was assigned a colour, and consequently a repetitively coloured path containing $v_j$ was created. As mentioned above, there is a unique such encoded path, say $P$ (with $r(v_j,P)=(h,q,o)$).
Suppose that $v_{i_1},v_{i_2},\ldots,v_{i_{2h}}$ are the consecutive vertices of $P$ with $v_{i_{q+1}}=v_j$ ($q+1\leq h$), then $Y_{t+1}=Y_t\cup\{i_1,i_2,\ldots,i_h\}$.

We will use the fact above to prove that given $R$ and a partial vertex colouring $c$ we may reconstruct all consecutive choices (if these exist) that led to obtaining $R$ and $c$, and hence prove a uniqueness of the corresponding algorithm execution. It is sufficient to analyze the vector $R$ from right to left now, modifying at the same time the partial vertex colouring $c$ (so that it always corresponds to a colouring that was obtained after the step that we are about to analyze at the moment) until none of the vertices is coloured under $c$.
Suppose we were able to reconstruct all choices that had to be undertaken by the algorithm in steps $T,T-1,\ldots,t$ for some $t\in[2,T+1]$
(where we set $t=T+1$ for convenience before analyzing the first step $T$).
Let $j=\min Y_{t-1}$. Then we know that the vertex $v_j$ had its colour randomly chosen from its list in step $t-1$.

Assume first that $R(t-1)=\emptyset$. Then we know from the contemporary (partial) colouring $c$, which colour had to be chosen for $v_j$ (i.e., the colour $c(v_j)$). We thus modify our colouring by erasing the colour of $v_j$ (i.e., setting $c(v_j)=0$) to obtain the colouring produced by the algorithm after step $t-2$, and continue.

Assume then that $R(t-1)=(h,q,o)$. Let $v_{i_1},v_{i_2},\ldots,v_{i_{2h}}$ with $v_{i_{q+1}}=v_j$ ($q+1\leq h$) be the consecutive vertices of the unique path $P$
such that $r(v_j,P)=(h,q,o)$. Then, since our contemporary colouring $c$ corresponds to the one after the end of step $t-1$ (hence the vertices $v_{i_{h+1}},\ldots,v_{i_{2h}}$ are coloured under it, while $v_{i_1},\ldots,v_{i_{h}}$ are not) and the vertices of $P$ made up a repetition in step $t-1$, we know that the colour equal to $c(v_{i_{q+1+h}})$ had to be chosen for $v_j=v_{i_{q+1}}$ in step $t-1$. Additionally, to obtain the colouring produced by the algorithm after step $t-2$, we set $c(v_{i_{p}}):=c(v_{i_{p+h}})$ for $p\in[1,h]$, $p\neq q+1$ (not changing the colours of the remaining vertices). \\
\end{proof}

\textbf{The counting} \\

Let us denote by $\mathcal{R}_T$ the set of records produced by the algorithm in all possible executions with exactly $T$ steps admitted.
We first bound $|\mathcal{R}_T|$ from above.
For any numbers $n_1,n_2,\ldots,n_k$ and nonnegative integers $p_1,p_2,\ldots,p_k$,
we will denote by $n_1^{p_1}n_2^{p_2}\ldots n_k^{p_k}$ the sequence
$$(\underbrace{n_1\ldots,n_1}_{p_1},\underbrace{n_2\ldots,n_2}_{p_2},\ldots,\underbrace{n_k,\ldots,n_k}_{p_k}).$$
Moreover, we will write $n_i$ instead of $n_i^1$ and assume that $n_i^0$ contributes no symbol to the sequence.
For any two sequences $X=(x_1,\ldots,x_b)$ and $Z=(z_1,\ldots,z_d)$, we shall also write $X\oplus Z$
to denote their \emph{concatenation}, i.e.,
$X\oplus Z = (x_1,\ldots,x_b,z_1,\ldots,z_d)$.
For any given record $R$, and $t\in[1,T]$, let $S(R(t))=1$ if $R(t)=\emptyset$, or let $S(R(t))=1(-1)^h$ if $R(t)=(h,q,o)$.
Note that multiplicities of $1$'s and $-1$'s correspond to the number of coloured and uncoloured, resp., vertices in the given steps of the algorithm.
Then let $$S'(R)=S(R(1))\oplus S(R(2))\oplus\ldots\oplus S(R(T)).$$
Since the number of coloured vertices cannot be negative, and each step contributes exactly one `$1$' to the sequence $S'(R)$, its length does not exceed $2T$ (and in fact is always smaller). It shall be more convenient for us to consider sequences of length exactly $2T$, hence we set $S(R):=S'(R)\oplus 1^w$, where $w$ is the difference between $2T$ and the length of $S'(R)$ (hence the sum of the elements of $S(R)$ is now equal to $2w$).\\

Denote by $\mathcal{S}_{2T}$ the set of all sequences $S(R)$ which might be returned by our algorithm with $T$ steps admitted,
and let $\mathcal{A}_m$ stand for the set of all $m$-element sequences consisting of $1$'s and $-1$'s (hence $|\mathcal{A}_m|=2^m$).
Now set $\mathcal{A}=\bigcup_{m\in\mathbb{N}}\mathcal{A}_m$ and let $f:\mathcal{A}\to \mathbb{N}$ be a function defined as follows: \\
For any sequence $S$ which contains at least one `$-1$', i.e., a sequence of the form:
$$S=1^{k_1}(-1)^{h_1}1^{k_2}(-1)^{h_2}\ldots(-1)^{h_p}1^{k_{p+1}}\in \mathcal{A}_m$$
with $m,p\geq 1$, $h_i>0$ for $i=1,\ldots,p$, $k_i>0$ for $i=2,\ldots,p$ and $k_1,k_{p+1}\geq 0$,
$$f(S):=2\Delta h_1\cdot 2\Delta h_2\cdot\ldots\cdot 2\Delta h_p,$$
and $f(1^m):=1$. \\
Note that since for every element of a record of the form $(h,q,o)$, there are exactly $h$ possible values of $q$ and $2\Delta$ possible ones of $o$ for a fixed $h$, then $f(S)$ bounds from above the number of possible records $R$ with $S(R)=S$. (It follows also by our hypothesis assumption that all executions of the algorithm consist of exactly $T$ steps.)
Therefore,
\begin{equation}\label{suminequality1}
|\mathcal{R}_T|\leq \sum_{S\in \mathcal{S}_{2T}} f(S) \leq \sum_{S\in \mathcal{A}_{2T}} f(S).
\end{equation}
We shall thus investigate the following sequence:
\begin{equation}\label{suminequality2}
a_m: = \sum_{S\in \mathcal{A}_{m}} f(S).
\end{equation}

To calculate the value of $a_m$, we use the fact that the following recurrence relation holds for $m\geq 2$ by the definition of this sequence:
\begin{equation}\label{sequence_inequality_1}
a_m=a_{m-1}+2\Delta a_{m-2}+2\Delta\cdot 2a_{m-3}+\ldots + 2\Delta(m-2)a_1+2\Delta(m-1)+2\Delta m,
\end{equation}
where the factor $a_{m-1}$ corresponds to the sequences $S\in \mathcal{A}_{m}$ with `$1$' at the end,
the factor $2\Delta a_{m-2}$ - to the ones with the end of the form `$\ldots 1,-1$',
the factor $2\Delta \cdot 2a_{m-3}$ - to the ones with the end of the form `$\ldots 1(-1)^2$',$\ldots$,
the factor $2\Delta(m-2)a_1$ - to the ones with the end of the form `$\ldots 1(-1)^{m-2}$',
the factor $2\Delta(m-1)$ - to the sequence $1(-1)^{m-1}$, and
the factor $2\Delta m$ - to the sequence $(-1)^{m}$.

Observe now that by (\ref{sequence_inequality_1}), for $m\geq 3$:
$$a_m-a_{m-1}=a_{m-1}+(2\Delta-1)a_{m-2}+2\Delta a_{m-3}+2\Delta a_{m-4}+\ldots+2\Delta a_1 +4\Delta,$$
hence
\begin{equation}\label{sequence_inequality_2}
a_m=2a_{m-1}+(2\Delta-1)a_{m-2}+2\Delta a_{m-3}+2\Delta a_{m-4}+\ldots+2\Delta a_1 +4\Delta.
\end{equation}
Consequently, by (\ref{sequence_inequality_2}), for $m\geq 4$:
$$a_m-a_{m-1}=2a_{m-1}+(2\Delta-3)a_{m-2}+a_{m-3},$$
and thus:
\begin{equation}\label{recurrenceequation}
a_m=3a_{m-1}+(2\Delta-3)a_{m-2}+a_{m-3},
\end{equation}
while $a_1=2\Delta+1$, $a_2=8\Delta+1$ and $a_3=4\Delta^2+20\Delta+1$. \\

\textbf{
The proof} \\

For $\Delta=1$ the theorem is trivial, while for $\Delta=2$ it follows from the fact that $\pi_{fl}(P_n)\leq 4$ and $\pi_{fl}(C_n)\leq 5$ for every $n$, see Corollary~\ref{GPZ} and Theorem~\ref{cycle} in Section~\ref{families_section}.

For any fixed $\Delta \geq 3$ we will prove that at least one possible execution of the algorithm must stop after less than $T$ steps.
As mentioned, we proceed by the way of contradiction and thus suppose that all executions of the algorithm perform all $T$ admitted steps
(hence there are $l^T$ possible executions of the algorithm),
where $T$ is some `large' constant (dependent on $n$ and $\Delta$).

Assume first that $\Delta=3$ and the lists length $l$ equals to $15=5\Delta$. Then by~(\ref{recurrenceequation}),
$$a_m=c_0\lambda_0^m+c_1\lambda_1^m+c_2\lambda_2^m,$$
where $c_0,c_1,c_2\in\mathbb{C}$ are some (fixed) constants and $\lambda_0,\lambda_1,\lambda_2$
are the roots of
the characteristic equation\footnote{A polynomial used to solve recurrence relation, see \cite{Ch08}.}
$\lambda^3-3\lambda^2-3\lambda-1=0$
in the complex domain $\mathbb{C}$, i.e.,
$\lambda_0\approx 3.847$ ($\lambda_0<3.85$), $\lambda_1\approx -0.424+0.284i$, $\lambda_2\approx -0.424-0.284i$.
These might be precisely calculated using e.g.
Cardano's formula\footnote{By Cardano's formula, $\lambda_0=\sqrt[3]{2}+\sqrt[3]{4}+1$, $\lambda_1=1-\frac{1}{2}(\sqrt[3]{2}+\sqrt[3]{4})+\frac{\sqrt{3}}{2}(\sqrt[3]{4}-\sqrt[3]{2})i$ and $\lambda_2=\overline{\lambda_1}$.} (see e.g. \cite{Co08})
or a computer program. Since $|\lambda_1|<\lambda_0$ and $|\lambda_2|<\lambda_0$, then there exists a positive constant $C$ so that
$$a_m\leq C\lambda_0^m<C\cdot 3.85^m.$$
Therefore, by~(\ref{suminequality1}) and~(\ref{suminequality2}),
$$
|\mathcal{R}_T| < C\cdot (3.85)^{2T} = C\cdot 14.8225^T.
$$
Since there are at most $16^n$ possible partial vertex colourings for a graph of order $n$ from the given $15$-element lists (counting in the $16$-th possibility that a vertex is not coloured) and since by Observation~\ref{key-one-to-one-obs} every pair consisting of a possible record and a partial colouring of $G$ corresponds to at most one algorithm execution, then there should be at most
$$16^n\cdot C\cdot 14.8225^T$$
distinct executions of the algorithm (where $n$ is some fixed integer: we analyze a fixed $G$). Therefore, for sufficiently large $T$, we obtain a contradiction with the assumption that there are exactly
$15^T$ distinct executions of the algorithm.

Assume now that $\Delta\geq 4$ is fixed and
$$l=\left\lceil \left[1+2\sqrt{\frac{2\Delta}{3}}\cos \left(\frac{\arccos\left(\sqrt{\frac{27}{8\Delta}}\right)}{3}\right)\right]^2+0.5\right\rceil.$$
Denote
$$\varphi=\frac{\arccos\left(\sqrt{\frac{27}{8\Delta}}\right)}{3}$$
and first note that
$$l=\left\lceil \left(1+2\sqrt{\frac{2\Delta}{3}}\cos \varphi\right)^2+0.5\right\rceil \leq  \left\lceil  \left(1+2\sqrt{\frac{2\Delta}{3}}\right)^2+0.5\right\rceil <\left(1+2\sqrt{\frac{2\Delta}{3}}\right)^2+1.5<5\Delta,$$
where the last inequality might be verified by elementary calculations (for $\Delta\geq 4$).
On the other hand,
\begin{eqnarray*}
l &\leq& \left(1+2\sqrt{\frac{2\Delta}{3}}\cos \varphi\right)^2+1.5\\
  &=& \frac{8\Delta}{3}\cos^2 \varphi +4\sqrt{\frac{2\Delta}{3}}\cos \varphi+2.5\\
  &=& \left[2+\left(\frac{8}{3}\cos^2\varphi-2+4\sqrt{\frac{2}{3\Delta}}\cos \varphi+\frac{2.5}{\Delta} \right)\right]\Delta,
\end{eqnarray*}
where
\begin{eqnarray*}
 & & \lim_{\Delta\to\infty} \left(\frac{8}{3}\cos^2\varphi-2+4\sqrt{\frac{2}{3\Delta}}\cos \varphi+\frac{2.5}{\Delta}\right)\\
 &=& \lim_{\Delta\to\infty} \frac{8}{3}\cos^2\left(\frac{\arccos\left(\sqrt{\frac{27}{8\Delta}}\right)}{3}\right)-2 \\
 &=& \frac{8}{3}\cos^2\left(\frac{\frac{\pi}{2}}{3}\right)-2 = \frac{8}{3}\left(\frac{\sqrt{3}}{2}\right)\sp{2}-2= 2-2 = 0.
\end{eqnarray*}
We thus have that $l=(2+o(1))\Delta$.\\ 
Now let us get back to estimating $|\mathcal{R}_T|$ in this case, hence we shall analyze the recurrence relation $a_m=3a_{m-1}+(2\Delta-3)a_{m-2}+a_{m-3}$ by solving its
characteristic equation
$\lambda^3-3\lambda^2-(2\Delta-3)\lambda-1=0$.
The substitution $\lambda = y-\frac{-3}{3\cdot 1}=y+1$ will reduce this equation to the form $y\sp{3}+P\cdot y + Q=0$ where $P=Q=-2\Delta$. Hence the discriminant
$$\frac{P\sp{3}}{27}+\frac{Q\sp{2}}{4}=\frac{-8\Delta\sp{3}}{27}+\frac{4\Delta\sp{2}}{4}=\Delta\sp{2}\left(1-\frac{8\Delta}{27}\right)$$ is for $\Delta\geq 4$ negative and thus by \cite{Co08}
our equation has three real roots of the form
\begin{eqnarray*}
 \lambda'_0 & = & 1+ 2\sqrt{\frac{2\Delta}{3}}\cdot\cos\varphi,\\
 \lambda'_1 & = & 1+ 2\sqrt{\frac{2\Delta}{3}}\cdot\cos\left(\varphi +\frac{2\pi}{3}\right),\\
\lambda'_2 & = & 1+ 2\sqrt{\frac{2\Delta}{3}}\cdot\cos\left(\varphi +\frac{4\pi}{3}\right),
\end{eqnarray*}
where $\varphi \in (0;\frac{\pi}{6})$ was defined above. Hence $|\lambda'_1|<\lambda'_0$ and $|\lambda'_2|<\lambda'_0$.
Consequently,
$$a_m=c'_0(\lambda'_0)^m+c'_1(\lambda'_1)^m+c'_2(\lambda'_2)^m\leq C'(\lambda'_0)^m$$
for some constants (dependent on $\Delta$) $c'_0,c'_1,c'_2$ and $C'>0$.
Therefore, by~(\ref{suminequality1}) and~(\ref{suminequality2}),
$$
|\mathcal{R}_T| \leq C'\cdot (\lambda'_0)^{2T} < C'\cdot ((\lambda'_0)^{2} + 1)\sp{T} < C'\cdot (l-0.5)^T.
$$
By Observation~\ref{key-one-to-one-obs}, we analogously as above obtain that there should be at most
$$(l+1)^n\cdot C'\cdot (l-0.5)^T$$
distinct executions of the algorithm. For sufficiently large $T$, we thus obtain a contradiction with the assumption that there are exactly $l^T$ executions of the algorithm.

In both cases we reach a contradiction with the hypothesis that all executions of the algorithm perform all $T$ admitted steps.
Consequently, at least one possible execution of the algorithm must stop after less than $T$ steps, and thus yield a desired colouring of $G$. \\
\end{proof}

\section{Families of graphs with a constant upper bound for $\pi_{fl}$\label{families_section}}

In this section we provide a number of classes of plane graphs for which constant upper bounds on the investigated parameter hold.
We will start by presenting auxiliary lemmas applicable in the later arguments.\\

\textbf{Preliminary lemmas}\\

The following lemma was proved in \cite{HJSS09}.
For a sequence of symbols $S=a_1$ $a_2\dots a_n$ with $a_i\in\mathbb{A}$, for all $1\leq k\leq l\leq n$, the block $a_k$ $a_{k+1}\dots a_{l}$ is denoted by $S_{k,l}$.

\begin{Lemma}[\cite{HJSS09}]\label{1}
Let $A=a_1 a_2 \dots a_m$ be a non-repetitive sequence with $a_i\in\mathbb{A}$ for all \    $i=1,2,\dots,m$. Let $B^i=b^i_1 b^i_2 \dots b^i_{m_i}$; $0\leq i\leq r+1$, be non-repetitive sequences with $b\sp{i}_j\in\mathbb{B}$ for all \  $i=0,1,\dots,r+1$ and $j=1,2,\dots,m_i$. If $\mathbb{A}\cap\mathbb{B}=\emptyset$, then $S=B^0$ $A_{1,n_1}$ $B^1$ $A_{n_1+1,n_2}\dots B^r$ $A_{n_r+1,m}$ $B^{r+1}$ is a non-repetitive sequence.
\end{Lemma}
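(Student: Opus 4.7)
The plan is to argue by contradiction. Suppose $S$ contains a repetition, i.e., a block of $2h$ consecutive terms $s_k s_{k+1}\dots s_{k+2h-1}$ satisfying $s_{k+j}=s_{k+h+j}$ for $j=0,1,\ldots,h-1$. The key observation exploits the hypothesis $\mathbb{A}\cap\mathbb{B}=\emptyset$: no $\mathbb{A}$-symbol can equal a $\mathbb{B}$-symbol, so in every ``matching column'' $(s_{k+j},s_{k+h+j})$ the two entries lie in the same alphabet, and consequently the pattern of alphabet memberships along the $2h$ positions of the repetition has period $h$.

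I would then split the argument into two cases according to whether at least one $\mathbb{A}$-symbol occurs inside the repetition. Assume first that no $\mathbb{A}$-symbol occurs. The notation $A_{n_{i-1}+1,n_i}$ forces each inserted $A$-piece to be non-empty, so in $S$ any two of the blocks $B^0,B^1,\ldots,B^{r+1}$ are separated by at least one $\mathbb{A}$-symbol. Thus $2h$ consecutive $\mathbb{B}$-symbols must lie inside a single $B^i$, contradicting its non-repetitiveness. In the remaining case at least one position carries an $\mathbb{A}$-symbol; by the period-$h$ alphabet pattern, each half of the repetition contains the same number $u\geq 1$ of $\mathbb{A}$-symbols, paired off by equality. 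Reading the $\mathbb{A}$-symbols of $S$ in the order they occur reconstructs $A$, so the $\mathbb{A}$-symbols falling inside any window of consecutive positions of $S$ form a consecutive segment of $A$; in particular, the $2u$ $\mathbb{A}$-entries of the repetition are of the form $a_{p+1}\, a_{p+2}\, \ldots\, a_{p+2u}$ for some $p$, with $a_{p+1}\ldots a_{p+u}$ contributed by the first half and $a_{p+u+1}\ldots a_{p+2u}$ by the second. The period-$h$ matching then gives $a_{p+i}=a_{p+u+i}$ for $i=1,\ldots,u$, a repetition inside $A$, contradicting its non-repetitiveness.

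The only delicate step is to isolate and justify the two structural facts used above: the alphabet-periodicity observation (forced by $\mathbb{A}\cap\mathbb{B}=\emptyset$), and the remark that the $\mathbb{A}$-symbols inside any window of consecutive positions of $S$ form a consecutive sub-block of $A$ (because the $A$-pieces are concatenated in the prescribed order with $\mathbb{B}$-blocks between them). Once these are in hand, both cases fall out by short counting arguments without further calculation.
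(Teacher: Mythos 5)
Your proof is correct. Note that the paper does not prove this lemma at all --- it is imported verbatim from \cite{HJSS09} --- so there is no in-paper argument to compare against; your two-case argument (the alphabet-membership pattern of a repetition is $h$-periodic because $\mathbb{A}\cap\mathbb{B}=\emptyset$, whence either the whole repetition sits inside one $B^i$, or the $\mathbb{A}$-entries of the window form a consecutive block $a_{p+1}\dots a_{p+2u}$ of $A$ with the order-preserving shift by $h$ forcing $a_{p+i}=a_{p+u+i}$) is exactly the standard proof given in that reference, and both structural facts you flag as needing justification do hold (in particular each inserted piece $A_{n_{i-1}+1,n_i}$ is non-empty by the convention $k\le l$ in the notation $S_{k,l}$).
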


Note that a rainbow sequence (a sequence of length $k$ consisting of $k$ different symbols) is trivially non-repetitive. Moreover, if each sequence $B\sp{i}$, $0\leq i\leq r+1$, from Lemma \ref{1} consists of only one element, then it is also trivially non-repetitive and we immediately obtain the following corollary:

\begin{Corollary} \label{2}
Let $A=a_1 a_2 \dots a_m$ be a rainbow sequence with $a_i\in\mathbb{A}$ for all \  $i=1,2,\dots,m$. For $i=0,1,\dots,r+1$ let $B^i\notin\mathbb{A}$. Then $S=B^0$ $A_{1,n_1}$ $B^1$ $A_{n_1+1,n_2}\dots B^r$ $A_{n_r+1,m}$ $B^{r+1}$  is a non-repetitive sequence.
\end{Corollary}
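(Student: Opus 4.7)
The plan is to derive Corollary~\ref{2} as an immediate specialization of Lemma~\ref{1}, so the task reduces to checking that all hypotheses of that lemma are met. First I would verify that the rainbow sequence $A$ is non-repetitive: any repetition $r_1 r_2 \ldots r_{2n}$ with $n\geq 1$ would force $r_1 = r_{n+1}$, contradicting the fact that all entries of $A$ are pairwise distinct. So $A$ qualifies as the main non-repetitive ``spine'' in the setup of Lemma~\ref{1}.

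Next I would introduce $\mathbb{B}$ as the (finite) set of symbols that actually appear among $B^0, B^1, \ldots, B^{r+1}$. Since the hypothesis states that each $B^i \notin \mathbb{A}$, we obtain $\mathbb{A} \cap \mathbb{B} = \emptyset$, which is precisely the disjointness condition required by Lemma~\ref{1}. I would then note that each $B^i$, being a one-term sequence, is trivially non-repetitive, because the shortest possible repetition has length at least $2$; in particular each $B^i$ is a non-repetitive sequence over $\mathbb{B}$.

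With $A$ non-repetitive over $\mathbb{A}$, each $B^i$ non-repetitive over $\mathbb{B}$, and $\mathbb{A}\cap \mathbb{B}=\emptyset$, all hypotheses of Lemma~\ref{1} are satisfied. Applying the lemma directly yields that
\[
S=B^0\, A_{1,n_1}\, B^1\, A_{n_1+1,n_2}\ldots B^r\, A_{n_r+1,m}\, B^{r+1}
\]
is non-repetitive, which is exactly the conclusion of the corollary.

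There is no real obstacle here; the statement is essentially a remark that inserting isolated symbols from outside the rainbow alphabet cannot create a repetition, and the argument collapses to a one-line invocation of the previous lemma once the two trivial observations (rainbow $\Rightarrow$ non-repetitive, and length-one $\Rightarrow$ non-repetitive) are recorded.
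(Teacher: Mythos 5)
Your proposal is correct and follows exactly the paper's route: the authors likewise obtain Corollary~\ref{2} as an immediate specialization of Lemma~\ref{1}, observing that a rainbow sequence is trivially non-repetitive and that each one-element sequence $B^i$ is trivially non-repetitive, with $\mathbb{A}\cap\mathbb{B}=\emptyset$ following from $B^i\notin\mathbb{A}$. No gaps.
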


The following is also
a useful observation about the effect of uniquely coloured vertices: 

\begin{Lemma} \label{unique}
Let $v$ be a vertex that  obtains a unique colour in a vertex colouring $\varphi$ of a graph $G$.
Then the colouring $\varphi$ is non-repetitive
if and only if
all the components of the graph $G\setminus\{v\}$ are coloured (not facial) non-repetitively.
\end{Lemma}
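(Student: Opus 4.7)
The plan is to prove both directions of the equivalence, with the forward direction being essentially trivial and the backward direction requiring a short case analysis that exploits uniqueness of $\varphi(v)$.

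For the forward direction ($\Rightarrow$), I would observe that every simple path lying inside a component of $G\setminus\{v\}$ is also a simple path in $G$. So if $\varphi$ is non-repetitive on $G$, its restriction to any such component is trivially non-repetitive.

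For the backward direction ($\Leftarrow$), assume every component of $G\setminus\{v\}$ is non-repetitively coloured by $\varphi$, and suppose, towards a contradiction, that there is a repetitively coloured simple path $P=v_1 v_2\dots v_{2n}$ in $G$ with $\varphi(v_i)=\varphi(v_{n+i})$ for all $i\in[1,n]$. I split into two cases according to whether $v$ lies on $P$. In the case $v\notin V(P)$, the path $P$ is contained in $G\setminus\{v\}$; since a path is connected, it must lie entirely within one component of $G\setminus\{v\}$, contradicting the hypothesis that this component is non-repetitively coloured. In the case $v=v_j$ for some $j\in[1,2n]$, the repetition condition pairs $v_j$ with $v_{j+n}$ (when $j\leq n$) or with $v_{j-n}$ (when $j>n$), forcing that paired vertex to share the colour $\varphi(v)$. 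Because $\varphi(v)$ is unique, the paired vertex must itself equal $v$, which contradicts the fact that $P$ is a simple path and hence has pairwise distinct vertices.

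I do not expect any real obstacle here; the lemma is essentially a bookkeeping observation. The only subtle point is ensuring that $P$ lies in a single component of $G\setminus\{v\}$ in the first case, which follows at once from connectedness of a path, and ruling out that the repetition ``straddles'' $v$ in the second case, which is exactly where the uniqueness hypothesis is used.
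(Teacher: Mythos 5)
Your proof is correct and follows essentially the same route as the paper: the forward direction is the trivial restriction argument, and the backward direction uses the uniqueness of $\varphi(v)$ to show $v$ cannot lie on a repetitively coloured path (the paper phrases this as $v$ not lying in the ``repetitive part'' of a path, you phrase it as the paired vertex being forced to equal $v$, but the mechanism is identical), so any repetition must sit inside a component of $G\setminus\{v\}$. No gaps.
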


\begin{proof}
If the colouring of $G$ is non-repetitive, then every path of $G$ is non-repetitive. No more paths are obtained by removing $v$ from $G$ and hence, every path in every component of  $G\setminus\{v\}$  is non-repetitive. \\
On the other hand, assume all the components of the graph $G\setminus\{v\}$ are coloured non-repetitively by $\varphi$. Let $P$ be a path in $G$. If $P$ does not contain the vertex $v$, than $P$ is a path in some component of $G\setminus\{v\}$ and therefore, it is coloured non-repetitively. If $P$ contains a vertex $v$ which is uniquely coloured on $P$ and there is some repetition on $P$, then $v$ is not in the repetitive part of $P$, and the repetition occurs on a subpath of $P$ which belongs to some component of $G\setminus\{v\}$ - a contradiction.
\end{proof}

\textbf{Families with constant upper bounds}\\

By means of a sophisticated application of a special version of the Local Lemma, it was shown in \cite{GPZ10} that $\pi_{\rm ch}(P_n)\leq 4$ for every path $P_n$. It was a significant improvement of the previously known bounds, see \cite{GPZ10} for details.
(Whether the optimal bound in this list version of the original Thue problem is $3$ or $4$ is still an intriguing open problem).
The later proof of the same theorem from \cite{GKM10} was the first (simple) application of the entropy compression method to nonrepetitive sequences.
As a corollary of this result, by (\ref{3piEq}), we immediately obtain the following.

\begin{Corollary}[\cite{GPZ10}] \label{GPZ}
For a path $P_n$ on $n$ vertices, $\pi_{fl} (P_n)\leq 4$.
\end{Corollary}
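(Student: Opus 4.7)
The corollary is essentially an immediate consequence of two facts that are already in hand. The plan is to combine the known list-version bound $\pi_{\rm ch}(P_n)\leq 4$ from \cite{GPZ10} with the chain of inequalities (\ref{3piEq}), which asserts $\pi_f(G)\leq \pi_{fl}(G)\leq \pi_{\rm ch}(G)$ for every plane graph $G$. Applying this to $G=P_n$ (with any fixed planar embedding, say all vertices on a horizontal line) gives $\pi_{fl}(P_n)\leq \pi_{\rm ch}(P_n)\leq 4$.

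The only conceptual point that deserves a sentence is the observation that (\ref{3piEq}) really does apply here: $P_n$ has a unique embedding up to homeomorphism, and every facial path of $P_n$ is in particular a path of $P_n$. Hence any non-repetitive list colouring of $P_n$ (as an abstract graph) is automatically a facial non-repetitive list colouring, so the list version dominates the facial list version for paths. This is precisely the content of the right-hand inequality of (\ref{3piEq}) specialized to $G=P_n$.

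No separate argument, and in particular no new application of the entropy compression method, is needed; everything reduces to quoting \cite{GPZ10}. There is no real obstacle beyond noting planarity of $P_n$. For clarity I would phrase the proof as a single paragraph, stating the assumption, citing Theorem/Corollary from \cite{GPZ10}, and closing with the application of (\ref{3piEq}).
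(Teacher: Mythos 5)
Your proposal is correct and matches the paper exactly: the authors also derive the corollary immediately from the bound $\pi_{\rm ch}(P_n)\leq 4$ of \cite{GPZ10} together with the inequality chain (\ref{3piEq}). Nothing further is needed.
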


This result implies the next observation:

\begin{Theorem} \label{cycle}
For a cycle $C_n$ on $n$ vertices, $\pi_{fl}(C_n)\leq 5$.
\end{Theorem}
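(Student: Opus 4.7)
The plan is to reduce the cycle to a path by fixing the colour of one vertex, and then invoke Corollary~\ref{GPZ} together with Lemma~\ref{unique}.

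First I would fix an arbitrary vertex $v$ of $C_n$ and an arbitrary colour $c \in L(v)$, assigning $\varphi(v) := c$. For every other vertex $u$ of the cycle I would set $L'(u) := L(u) \setminus \{c\}$; since $|L(u)| \geq 5$ we have $|L'(u)| \geq 4$. The graph $C_n \setminus \{v\}$ is isomorphic to the path $P_{n-1}$, so by Corollary~\ref{GPZ} there is a facial non-repetitive list colouring $\varphi'$ of $P_{n-1}$ from the lists $L'$. Extending $\varphi'$ by $\varphi(v)=c$ gives a full list colouring of $C_n$ in which the colour $c$ appears at $v$ and nowhere else, so $v$ receives a unique colour.

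Next I would verify the non-repetitiveness. Since a path's facial paths are exactly its subpaths, the colouring $\varphi'$ is actually non-repetitive on $P_{n-1}$ in the ordinary sense used in Lemma~\ref{unique}. Applying Lemma~\ref{unique} with the uniquely coloured vertex $v$ yields that $\varphi$ is non-repetitive on $C_n$, i.e.\ every simple path of $C_n$ is non-repetitively coloured. Finally, because both faces of $C_n$ have the whole cycle as their boundary walk, every facial path of $C_n$ is a subpath of the cycle and hence a simple path in $C_n$; therefore $\varphi$ is in particular facial non-repetitive, which is what we need.

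I do not expect a serious obstacle here; the only subtle point is the translation between facial non-repetitiveness and the ordinary non-repetitiveness required by Lemma~\ref{unique} and by Corollary~\ref{GPZ}, which works because the underlying graphs are $1$-dimensional (a cycle and a path). The argument yields $\pi_{fl}(C_n) \leq 5$ for every $n$.
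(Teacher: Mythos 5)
Your proof is correct and follows essentially the same route as the paper: colour one vertex arbitrarily, delete that colour from the remaining lists to make it unique, colour the resulting path via Corollary~\ref{GPZ}, and conclude with Lemma~\ref{unique}. The extra care you take in translating between facial and ordinary non-repetitiveness on paths and cycles is sound, and the paper simply leaves that step implicit.
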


\begin{proof}
Let $C_n$ be a cycle on $n$ vertices, where each vertex is endowed with a list of at least $5$ admissible colours.
Choose one vertex of $C_n$ and colour it with an arbitrary colour from its list. By removing this colour from the lists of all other vertices (which form a path $P_{n-1}$ on $n-1$ vertices) we make this colour unique in the colouring of $C_n$. By Lemma~\ref{unique} and Corollary~\ref{GPZ}, $C_n$ can be coloured non-repetitively using colours from the assigned lists.
\end{proof}

Very similar arguments give us results for wheels, stars and subdivisions of stars. \\
A {\em star }($S_n$) on $n+1$ vertices is the graph that arises from connecting one vertex ({\em central vertex}) to $n$ vertices of an independent set. A {\em wheel graph} ($W_n$) (or simply wheel) on $n+1$ vertices is the graph formed by connecting a single vertex ({\em central vertex}) to $n$ vertices of a cycle.

\begin{Theorem}
For a star $S_n$ on $n+1$ vertices, $\pi_{fl}(S_n)=2$. \\
For $S$ being a subdivision of a star $S_n$ on $n+1$ vertices, $\pi_{fl}(S)\leq 5$. \\
For a wheel $W_n$ on $n+1$ vertices, $\pi_{fl}(W_n)\leq 6$.
\end{Theorem}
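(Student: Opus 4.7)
The three bounds follow a common template: fix a colour for the central vertex so as to make it unique in the whole graph, and then apply one of the already-established bounds to the remaining graph. More precisely, I would choose an arbitrary $\alpha \in L(c)$ for the centre $c$, and remove $\alpha$ from the list of every other vertex, so that $\alpha$ appears only at $c$ in any colouring we subsequently construct.

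For the star $S_n$, after this step each leaf still has at least one admissible colour in its (reduced, size $\geq 1$) list. Colouring each leaf with such a colour makes every edge properly coloured. Since the longest facial path of $S_n$ has only three vertices, the only possible repetitions have length $2$, and these are ruled out by properness. Hence $\pi_{fl}(S_n)\leq 2$. The matching lower bound $\pi_{fl}(S_n)\geq 2$ is immediate: any edge is itself a length-$2$ facial path and therefore must have distinctly coloured endpoints.

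For a subdivision $S$ of $S_n$, lists of size $5$ yield residual lists of size at least $4$ on $S\setminus\{c\}$. The components of $S\setminus\{c\}$ are the subdivided arms, i.e.\ a disjoint union of paths, and Corollary~\ref{GPZ} provides a (list) non-repetitive colouring of each arm from its reduced list. Now $S$ is a tree, so every path is facial and the notions of non-repetitive and facial non-repetitive colouring coincide on $S$; in particular Lemma~\ref{unique} applies and certifies that the combined colouring is non-repetitive, hence facial non-repetitive. This gives $\pi_{fl}(S)\leq 5$. For the wheel $W_n$, lists of size $6$ reduce to lists of size at least $5$ on the rim cycle $C_n$. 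Since in a cycle every path is facial, the bound $\pi_{fl}(C_n)\leq 5$ from Theorem~\ref{cycle} actually yields a fully non-repetitive list colouring of $C_n$ from these residual lists. Lemma~\ref{unique} then lifts this to a non-repetitive colouring of $W_n$, which is in particular facial non-repetitive, so $\pi_{fl}(W_n)\leq 6$.

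The only point that deserves a moment of care is the invocation of Lemma~\ref{unique}, which is stated for ordinary (not facial) non-repetitive colourings. In the two non-trivial cases, however, the graph obtained after deleting the central vertex is either a disjoint union of paths or a cycle, and in both situations every simple path is a facial path, so the facial-choosability results from Corollary~\ref{GPZ} and Theorem~\ref{cycle} supply inputs of the stronger type required by Lemma~\ref{unique}. Its conclusion, full non-repetitivity of the colouring of the whole graph, then trivially implies facial non-repetitivity, completing each argument.
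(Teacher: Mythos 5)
Your proposal is correct and follows essentially the same route as the paper: colour the centre, delete its colour from all other lists, and apply Lemma~\ref{unique} together with Corollary~\ref{GPZ} (for the subdivided arms) or Theorem~\ref{cycle} (for the rim). Your additional remarks --- the explicit lower bound for the star and the observation that on paths and cycles facial non-repetitive colourings coincide with ordinary ones, so the inputs to Lemma~\ref{unique} are of the required strength --- are sound and in fact make the argument slightly more complete than the paper's own.
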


\begin{proof}
Let $G$ belong to one of the families of graphs mentioned in the theorem. Let each vertex of $G$ be endowed with a list of at least $k$ admissible colours ($k=2$ for $G$ being a star $S_n$, $k=5$ for $G$ being a subdivision $S$ of a star $S_n$ and $k=6$ for $G$ being a wheel $W_n$).
Let $v$ be the central vertex of $G$. Colour it with some colour from its list. Remove this colour from the lists of all other vertices of $G$.
According to Lemma \ref{unique} it is sufficient to show that every component of $G':=G\setminus\{v\}$ can be coloured non-repetitively using the remaining colours in the lists.\\
If $G$ is a star then all components of $G'$ are single vertices, so one remaining  colour in each list is enough. If $G$ is a subdivision of a star, the components of $G'$ are paths and 4 remaining colours are enough, and if $G$ is a wheel, then $G'$ is a cycle and 5 remaining colours in each list are sufficient.
\end{proof}

\begin{Theorem}
Let $W$ be a subdivision of a wheel $W_n$. Then $\pi_{fl}(W)\leq 8$.
\end{Theorem}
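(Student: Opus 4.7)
The plan is to mimic the arguments used above for the wheel $W_n$, but now allowing for the longer face boundaries caused by subdividing the spokes and the outer edges. Write $v$ for the central vertex, $u_1,\dots,u_n$ for the original outer vertices in cyclic order, and denote the interior vertices of the subdivided $i$-th spoke (from $v$ to $u_i$) by $s^i_1,\dots,s^i_{k_i-1}$ and those of the subdivided $i$-th outer edge (from $u_i$ to $u_{i+1}$) by $o^i_1,\dots,o^i_{l_i-1}$. Note that the inner face $F_i$ of $W$ is bounded by a cycle that runs $v, s^i_1,\dots,s^i_{k_i-1}, u_i, o^i_1,\dots,o^i_{l_i-1}, u_{i+1}, s^{i+1}_{k_{i+1}-1},\dots,s^{i+1}_1$ and closes back to $v$.

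First I would pick any colour $c_v\in L(v)$, assign it to $v$, and delete $c_v$ from every other list; each remaining list then has size at least $7$. Arguing as in Lemma \ref{unique} (applied facially), any facial path of $W$ passing through $v$ is automatically non-repetitive: a repetition would demand a second occurrence of $c_v$, which is impossible. Thus it remains to take care of facial paths avoiding $v$, which split into two families: (a) sub-paths of the subdivided outer cycle, and (b) sub-paths of some $F_i$ avoiding $v$, i.e.\ sub-paths of the path $(s^i_a,\dots,s^i_{k_i-1}, u_i, o^i_1,\dots,o^i_{l_i-1}, u_{i+1}, s^{i+1}_{k_{i+1}-1},\dots,s^{i+1}_b)$. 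For (a), I would apply Theorem~\ref{cycle} to the subdivided outer cycle using the remaining lists of size at least $7\ge 5$; call $\mathbb{A}$ the set of colours that actually get used. For (b), I would apply Lemma~\ref{1} with the outer-edge portion $u_i, o^i_1,\dots,o^i_{l_i-1}, u_{i+1}$ playing the role of the non-repetitive sequence ``$A$'' over $\mathbb{A}$ (it is a sub-path of the outer cycle, hence non-repetitive) and the two spoke interiors playing the role of $B^0, B^1$, chosen from a palette $\mathbb{B}$ disjoint from $\mathbb{A}$ so that each spoke interior is coloured non-repetitively as a path using Corollary~\ref{GPZ}.

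The main obstacle is fitting inside eight colours: naively this argument uses $1+5+3=9$ colours. The crucial saving is that one can arrange $c_v\in\mathbb{B}$, letting $v$ serve as the ``hinge'' of the spider formed by $v$ together with all spoke interiors. Then the $\mathbb{B}$-coloured runs appearing in the Lemma~\ref{1} decomposition of an inner-face arc (including those runs that pass through $v$) are sub-paths of the spider, and it suffices to colour the spider from the three colours of $\mathbb{B}$ so that each adjacent two-leg path through $v$---i.e.\ $(s^i_{k_i-1},\dots,s^i_1, v, s^{i+1}_1,\dots,s^{i+1}_{k_{i+1}-1})$ for every $i$---is non-repetitive. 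This can be done by colouring every spoke with the same suitably chosen non-repetitive sequence read outward from $v$, so that each such two-leg path becomes a palindrome around $v$ of a Thue-like sequence on three symbols whose palindromes are known to remain non-repetitive. Combining this with $|\mathbb{A}|=5$ gives a valid facial non-repetitive colouring drawn from lists of size $|\mathbb{A}|+|\mathbb{B}|=8$; the list aspect is handled by first extracting disjoint sub-lists for $\mathbb{A}$ and $\mathbb{B}$ (using the flexibility provided by lists of size $8$) before running the above argument.
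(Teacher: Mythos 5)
Your high-level architecture (make $v$ uniquely coloured, colour the subdivided outer cycle non-repetitively, colour the spoke interiors, and glue the pieces on each inner face with Lemma~\ref{1}) is the same skeleton the paper uses, but the way you enforce the disjointness hypothesis $\mathbb{A}\cap\mathbb{B}=\emptyset$ breaks the colour budget, and the step meant to rescue it does not survive the list setting. Concretely: Lemma~\ref{1} needs each spoke interior $B^0,B^1$ to be non-repetitive, and the only tool available for non-repetitive \emph{list} colouring of a path is Corollary~\ref{GPZ}, which requires $4$ available colours per vertex. You allocate only $|\mathbb{B}|=3$, and moreover you have already deleted $c_v$ from every other list while declaring $c_v\in\mathbb{B}$, so each spoke-interior vertex effectively retains $2$ usable colours --- not enough to colour non-repetitively any path on more than three vertices, even without lists. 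Your proposed fix (colour every spoke with one fixed square-free ternary word read outward from $v$, so that each two-leg path through $v$ is a square-free palindrome) is a fixed-alphabet construction: once you pass to $3$-element sub-lists $\mathbb{B}_y\subseteq L(y)$, different spoke vertices have different palettes and there is no common alphabet in which ``the same sequence'' can be imposed; the palindrome claim is also not an available lemma here. Finally, the preliminary step of ``extracting disjoint sub-lists'' is itself unproven: an outer vertex $x$ of an inner face must have its $5$-element sub-list avoid the union of the $\mathbb{B}$-sub-lists of \emph{all} interior vertices of the two bounding spokes, which can be arbitrarily many $3$-sets drawn from arbitrary lists, so nothing guarantees $5$ elements of $L(x)$ survive.

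The paper's proof sidesteps global palette disjointness entirely. It first colours $v$ and the vertices $u_i$ (the spoke vertices adjacent to the branch vertices $v_i$), choosing the $u_i$ non-repetitively in cyclic order, and then removes each such colour only from the lists of \emph{nearby} vertices (the two neighbouring spokes and outer segments). This costs every outer-cycle vertex at most $3$ colours (leaving $\ge 5$ for Theorem~\ref{cycle}) and every remaining spoke vertex at most $4$ (leaving $\ge 4$ for Corollary~\ref{GPZ}), and the locally unique colours of $v$ and the $u_i$ then act as the separating singleton blocks needed to invoke Corollary~\ref{2} of Lemma~\ref{1} on each facial path. If you want to repair your version, replace the global $\mathbb{A}/\mathbb{B}$ split by this local list-reduction mechanism; as written, the argument does not fit inside lists of size $8$.
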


\begin{proof}
Let $W$ be a subdivision of a wheel $W_n$ and let each vertex of $W$ be endowed with a list of at least $8$ colours.
Let $v$ be the central vertex of  $W_n$ ($W$ respectively). This vertex is uniquely determined if $n\ge 4$. If $n=3$ choose an arbitrary one. Let $v_1=v_{n+1}$, $v_2,\dots,$ $v_n=v_0$ be the remaining vertices of $W_n$ in clockwise order in which they occur on the cycle $W_n-v$, and let $P_1=P_{n+1}$, $P_2,\dots$, $P_n=P_0$ be paths of $W$ starting in $v$ and ending in $v_i$, $i=1,2,\dots,n$, each of them containing no further vertices of $W_n$. For $i=1,2,\dots,n$ denote by $u_i$ the vertex adjacent to $v_i$ on the path $P_i$ different from $v$ (if such a vertex exists). \\
Colour $v$ with some colour from its list. Remove this colour from the lists of all other vertices different from $v_1$, $v_2,\dots,$ $v_n$.
Consider the vertices $u_i$, $i=1,2,\dots,n$, in a clockwise order (where some vertices $u_i$ may be missing if $P_i$ consists only of 2 vertices). Colour these vertices using colours from their lists in such a way that these vertices in the cyclic order around $v$ are coloured non-repetitively. This is possible because each vertex $u_i$ still has a list of at least 7 admissible colours. For $i=1,2,\dots, n$ remove the color of vertex $u_i$ from  the list of every non-coloured vertex on the paths $P_{i-1}$, $P_i$, $P_{i+1}$, on the path between $v_{i-1}$ and $v_i$  containing no further  vertex of $W_n$ and on the path between $v_{i}$ and $v_{i+1}$  containing no further vertex of $W_n$.
From the list of each $v_i$, $i=1,2,\dots,n$, adjacent to $v$ in $W$ remove the colour of the vertex $v$. \\
This way at most $3$ colours are removed from the list of each vertex on the outer cycle of $W$. Hence by Theorem~\ref{cycle} it can be coloured non-repetitively using only the remaining colours from the lists. At most $4$ colours are removed from the list of each vertex on the path $P_i\setminus\{v,v_i,u_i\}$, $i=1,2,\dots,n$. Hence by Corollary~\ref{GPZ} each such a path can be coloured non-repetitively using the remaining colours from the lists. By Corollary~\ref{2} of Lemma~\ref{1} no facial path in $W$ is repetitive.
\end{proof}

\begin{Theorem}\label{product}
Let $G$ be a Cartesian product of two path graphs, $G = P_n\square P_m$, where $n\geq 3$, $m\geq 3$. Then $\pi_{fl} (G)\leq 5$.
\end{Theorem}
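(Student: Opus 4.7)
The plan is to colour $G = P_n \square P_m$ in two phases, exploiting that the facial paths of $G$ are of only two types: subpaths of the outer cycle (of length $2(n+m-2)$), and subpaths of length at most $4$ on the $(n-1)(m-1)$ interior $4$-faces. On any interior $4$-face the non-repetitiveness condition reduces to proper colouring of its boundary edges together with the absence of the alternating pattern $abab$ on its four vertices — in particular, once these two local conditions are met on every $4$-face and the outer cycle is facial non-repetitively coloured, the whole colouring of $G$ is facial non-repetitive.

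In Phase~$1$ I would colour the outer cycle of $G$ non-repetitively from the given $5$-element lists, which is possible by Theorem~\ref{cycle}. In Phase~$2$ I would colour the interior vertices $v_{i,j}$ (with $2\le i\le n-1$, $2\le j\le m-1$) in row-major order, top to bottom and left to right, choosing at each step a colour from the current vertex's list that respects proper colouring with its already-coloured neighbours and $abab$-avoidance on every $4$-face whose last vertex to be coloured is the current one. For a deep interior vertex ($3\le i\le n-2$, $3\le j\le m-2$) only the NW $4$-face is completed at this step and there are only two already-coloured neighbours (above and to the left), so at most three colours are forbidden and at least two valid choices remain in the $5$-element list.

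The hard part is the interior vertices adjacent to the outer cycle on two sides — namely $v_{n-1,j}$ and $v_{i,m-1}$ in general, and, most severely, every interior vertex in the thin cases $n=3$ or $m=3$ — because at such positions a second $4$-face completes at the same step and up to five colours could in principle be forbidden at once. To overcome this I would couple the two phases: during Phase~$1$ I would ensure, in addition to non-repetitiveness on the outer cycle, a bounded collection of auxiliary inequalities among outer cycle colours that prevent, for each problematic interior vertex, more than one of its two completed $4$-face $abab$ triggers from firing. These auxiliary conditions can be absorbed into the proof of Theorem~\ref{cycle} (which proceeds by pre-colouring a single cycle vertex uniquely and invoking Corollary~\ref{GPZ} on the resulting path) without breaking the bound of five on the list size. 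In the thin cases $n=3$ and $m=3$ I would instead reverse the order of the phases: first colour the single interior row (or column) of $G$, which is not itself a facial path of $G$ and so requires only proper colouring within itself, and then colour the two flanking boundary rows (or columns) as paths via Corollary~\ref{GPZ} on suitably reduced lists, finally invoking Lemma~\ref{1} (via its Corollary~\ref{2}) to confirm that the concatenated colouring of the outer cycle is non-repetitive.
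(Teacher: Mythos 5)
Your overall strategy --- colour the outer cycle first, then fill in the interior greedily using the observation that on each interior $4$-face only proper colouring of its edges and avoidance of the pattern $abab$ matter --- is the same as the paper's. The genuine gap is at the one step you correctly flag as hard but never actually resolve: the last interior vertex to be coloured, all four of whose neighbours are already coloured and all four of whose incident $4$-faces complete simultaneously. (Your worry about the other ``two-sided'' vertices $v_{n-1,j}$ and $v_{i,m-1}$ is unfounded: with three coloured neighbours and two completed faces the number of forbidden colours is at most $|\{a,b,c\}|+[a=b]+[b=c]\le 3$, where $b$ denotes the neighbour shared by the two completed faces, because an opposite corner is forbidden only when the corresponding pair of neighbours of the current vertex is monochromatic. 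Only a vertex with all four neighbours coloured and monochromatic reaches $1+4=5$ forbidden colours.) Your proposed repair --- imposing ``auxiliary inequalities'' on the outer-cycle colouring and absorbing them into the proof of Theorem~\ref{cycle} --- is not justified and does not obviously work: Corollary~\ref{GPZ} is a black box giving no control over which admissible colour a path vertex receives, and for $n,m\ge 4$ two of the four neighbours of the last interior vertex are themselves interior, so no constraint on outer-cycle colours alone can prevent its four neighbours from being monochromatic. The thin-case variant (middle row first, then the two flanking rows, then Lemma~\ref{1}/Corollary~\ref{2}) also does not go through as stated: the outer cycle of $P_3\square P_m$ is not the disjoint union of the two flanking rows, and Lemma~\ref{1} requires the interleaved pieces to use disjoint alphabets, which nothing in your construction guarantees.

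The paper closes exactly this gap with a device you are missing: before colouring the outer cycle it pre-colours a single outer vertex $v$ of degree $3$ adjacent to a corner $w$ of degree $2$, and deletes that colour from the lists of all other outer-cycle vertices, so $v$ is uniquely coloured on the outer cycle while the rest of the cycle becomes a path colourable from the remaining $4$-lists by Corollary~\ref{GPZ}. The interior is then traversed along a Hamiltonian path chosen to end at the interior corner $u$ adjacent to both $v$ and to $z$ (the other neighbour of $w$). Since $v$ and $z$ necessarily receive distinct colours, the neighbours of $u$ are not monochromatic, at most four colours are forbidden at $u$, and one of the five list colours survives. You would need this, or an equivalent guarantee that the final interior vertex has two differently coloured neighbours (for instance by constraining one of its interior neighbours at the step where that neighbour still has two admissible colours), to complete your argument.
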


\begin{proof}
Let $G$ be a graph described above and let each vertex of $G$ be assigned a list of at least $5$ colours.
Let $v$ be a vertex on the outer cycle of $G$ of degree $3$ adjacent to a vertex $w$ of degree $2$ and let $u$ be a vertex of degree $4$ adjacent to $v$. Let $w$ be adjacent to $z$, $z\neq v$. Colour $v$ with some colour from its list and remove this colour from the lists of all other vertices on the outer cycle $C$ of $G$, so that each vertex on the outer cycle is assigned a list of at least $4$ colours. Colour the path $P=C\setminus\{v\}$ non-repetitively using colours of the remaining lists. Consider the path $Q$ on all the remaining uncoloured vertices of $G$ ending in vertex $u$ and colour these vertices in the order as they appear on $Q$ so that $u$ is coloured as the last one. Let $x\in Q$ be a uncoloured vertex different from $u$. Vertex $x$ (of degree $4$) is adjacent with at most $3$ coloured vertices.
Remove the colours of these vertices from the list of $x$. If less then $3$ colours have been removed and there are two equally coloured neighbors of $x$ on the same facial path, remove from the list of $x$ also the colour of the vertex not adjacent with $x$ but lying also on this facial path (when this vertex is already coloured). Observe that in each case at most $3$ new colours are removed from the list of $x$. Colour $x$ with an arbitrary colour remaining in its list. The sequences of colours on the $4$-gonal faces around $x$ are of the form $ABCD$ or $ABAC$ hence no repetition is obtained.
Now colour the vertex $u$ using similar rules as for colouring the vertex $x$. All the 4 neighbors of $u$ are already coloured but at least two of them by different colours ($v$ and $z$). Hence no more then $4$ colours have to be removed from the list of $u$ and colouring it with an arbitrary remaining colour from its list ensures that the obtained colouring will be facially non-repetitive.
\end{proof}

Using similar arguments as in the proof of Theorem~\ref{product} (with a small variation that those vertices that are not lying at the outer cycle are being colored in arbitrary order - the existence of the path $Q$ is not needed) one can prove that for $G$ being a planar graph which is a large enough subgraph of the regular hexagonal, triangular respectively quadrangular tiling of the plane in the sense that the outer cycle does not have a chord, the facial Thue choice number is not greater then $8$, $7$ respectively $6$. \\
The situation becomes a little bit more complicated in the case of those graphs, where the outer cycle of a graph has a chord.

A {\em ladder graph} $L_n$ is a planar undirected graph with $2n$ vertices and $n+2(n-1)$ edges.
It can be obtained as the Cartesian product of two path graphs, one of which has only one edge: $L_n = P_n\square P_2$.

\begin{figure}[h!]
\begin{center}
    \includegraphics[width=5.7cm]{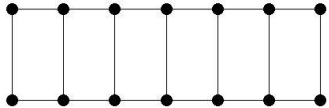}
\caption{Ladder graph}
\end{center}
\end{figure}

\begin{Theorem}\label{ladder}
Let $L_n$ be a ladder graph. Then $\pi_{fl}(L_n)\leq 8$.
\end{Theorem}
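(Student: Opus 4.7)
The plan is to uniquely colour the two corners $v_1$ and $v_n$ of the top row so that every facial path containing either of them is automatically non-repetitive, and then to colour the top and bottom rows as paths via Corollary~\ref{GPZ}. First I would pick arbitrary colours $c(v_1)\in L(v_1)$ and $c(v_n)\in L(v_n)$ with $c(v_n)\ne c(v_1)$ and delete both from every other list; each remaining list loses at most two colours and so still has size at least $6$. Next I would invoke Corollary~\ref{GPZ} on the interior top path $v_2 v_3\dots v_{n-1}$ to colour it non-repetitively from these reduced lists. Because $c(v_1)$ and $c(v_n)$ now appear in no other list, any would-be repetition on the full top path $v_1 v_2\dots v_n$ would have to avoid both $v_1$ and $v_n$ and hence lie entirely inside $v_2\dots v_{n-1}$, which we have just ruled out.

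For the bottom row I would set $L'(u_j):=L(u_j)\setminus\{c(v_j),c(v_{j-1})\}$, where $c(v_0)$ is ignored. A short case check shows that when $j\in\{1,2\}$ or $j\in\{n-1,n\}$ at least one of these two colours has already been removed in the previous step, while for $j\in\{3,\dots,n-1\}$ both are new; in every case at most two colours beyond $c(v_1),c(v_n)$ are removed from $L(u_j)$, so $|L'(u_j)|\ge 4$. A second application of Corollary~\ref{GPZ} then produces a non-repetitive colouring of $u_1 u_2\dots u_n$ drawn from the lists $L'(u_j)$.

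It remains to verify every facial path. Each rung $v_j u_j$ is non-monochromatic because $c(v_j)\notin L'(u_j)$. Every middle inner $4$-face $v_j v_{j+1}u_{j+1}u_j$ with $2\le j\le n-2$ escapes the unique length-$4$ repetition pattern (the one forcing $v_j=u_{j+1}$ and $v_{j+1}=u_j$) since $c(v_j)$ was removed from $L'(u_{j+1})$, so $v_j\ne u_{j+1}$; the two end inner $4$-faces contain $v_1$ or $v_n$, hence no facial subpath of them can be a repetition. Finally, any facial subpath of the outer cycle either visits $v_1$ or $v_n$, in which case it is non-repetitive by uniqueness, or it lies entirely inside one of the two arcs $v_2\dots v_{n-1}$ and $u_n u_{n-1}\dots u_1$ into which $\{v_1,v_n\}$ cuts the cycle, and is then non-repetitive by the row colourings. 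The main obstacle is precisely this last item: because the outer cycle has length $2n$ and wraps through the two corners $v_1 u_1$ and $v_n u_n$, its long facial subpaths can mix top and bottom colours in asymmetric ways that are awkward to block by purely local rung- and face-based constraints; the uniqueness trick at $v_1$ and $v_n$ severs the cycle at these two points and disposes of all such mixed repetitions at the cost of only two colours per list.
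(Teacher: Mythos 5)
Your proof is correct and follows essentially the same strategy as the paper's: give unique colours to two degree-$2$ corners so that the outer cycle splits into two arcs each coloured non-repetitively via Corollary~\ref{GPZ}, and delete the two top-row neighbours' colours from each bottom list so that every inner $4$-face reads $ABCD$ or $ABAC$. The only (immaterial) difference is your choice of the same-side corners $v_1,v_n$ where the paper uses the diagonal pair $v_1,u_n$, which merely changes which two arcs of the outer cycle play the role of the two paths.
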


\begin{proof}
Let $L_n$ be a ladder graph where each of the vertices is endowed with a list of  at least $8$ admissible colours. Let $u_1$, $v_1$, $u_n$ and $v_n$ be the vertices of degree $2$ in $L_n$ and $v_1$, $v_2,\dots,$ $v_n$, $u_n$, $u_{n-1},\dots,$ $u_1$ the sequence of vertices of $L_n$ in the order in which they appear on its outer cycle. Colour $u_n$ and $v_1$ with two different colours from their lists. Remove these colours from the lists of all other vertices of $L_n$. Colour the path on vertices $v_2$, $v_3,\dots,$ $v_n$ non-repetitively. For $i=1,2,\dots,n-1$ remove the colours of $v_{i}$ and $v_{i+1}$ from the list of colours assigned to $u_i$. Colour the path on vertices $u_1$, $u_2,\dots,$ $u_{n-1}$ non-repetitively with the remaining (at least $4$) colours in each of the lists. By Lemma~\ref{1} and Theorem~\ref{cycle} the outer cycle is coloured non-repetitively. As each sequence of colours on the $4$-gonal faces is of the form $ABCD$ or $ABAC$ they are also coloured non-repetitively.
\end{proof}

By adding two or more edges to a ladder graph $L_n$ some other families of graphs with constant upper bound for the facial Thue choice number can be obtained.

\begin{figure}[h!]
\begin{center}
   \includegraphics[width=5.7cm]{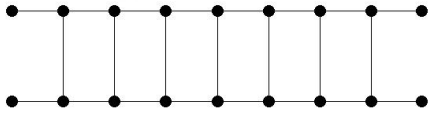}
 \caption{Ladder graph with 4 added edges}
\end{center}
\end{figure}

\begin{Theorem}
Let $L$ be a graph obtained from the ladder graph $L_{n}$ by adding $1$ pending edge to each of the $4$ vertices of degree $2$ in $L_{n}$. Then $\pi_{fl}(L)\leq 6$.
\end{Theorem}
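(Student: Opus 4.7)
My strategy mirrors the proof of Theorem~\ref{ladder}, but uses the four pendants to save exactly the two colour units needed to bring the list-size bound down from $8$ to $6$. The crucial preparatory step is a structural observation about facial paths in $L$. Denote the two long outer paths by $P_T := p_{v_1}v_1v_2\ldots v_n p_{v_n}$ and $P_B := p_{u_1}u_1u_2\ldots u_n p_{u_n}$, and the two short ``hooks'' at the ends by $H_L := p_{v_1}v_1u_1p_{u_1}$ and $H_R := p_{v_n}v_nu_np_{u_n}$ (where $p_{v_1},p_{v_n},p_{u_1},p_{u_n}$ are the four pendant vertices). I claim that every facial path of $L$ is a subpath of $P_T$, of $P_B$, of $H_L$, of $H_R$, or of the cyclic boundary $v_iv_{i+1}u_{i+1}u_i$ of one of the inner quadrangular faces ($1\le i\le n-1$). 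The reason is that along the boundary walk of the outer face each pendant $p$ sits between two occurrences of its unique neighbour in the ladder, so any simple facial subpath that reaches a corner from the outside is forced to terminate at the pendant at the opposite end of the hook, rather than wrap around.

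Given this, the colouring proceeds in two stages. First I colour $P_T$ facially non-repetitively, which is possible by Corollary~\ref{GPZ} because every list has size at least $6\ge 4$. Next I shrink the bottom-path lists. From each $u_i$ with $1\le i\le n-1$ I remove both $\varphi(v_i)$ and $\varphi(v_{i+1})$; from each of $u_n$ and $p_{u_n}$ I remove $\varphi(v_n)$; and from $p_{u_1}$ I remove $\varphi(v_1)$. Every reduced list still has at least $6-2=4$ colours, so Corollary~\ref{GPZ} applies again and produces a facially non-repetitive colouring of $P_B$ from the reduced lists.

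Finally I verify that every facial path of $L$ is non-repetitive. Subpaths of $P_T$ and $P_B$ are immediate. On each inner face $v_iv_{i+1}u_{i+1}u_i$ the only potentially dangerous pattern is the length-$4$ repetition $XYXY$, which would force $\varphi(v_{i+1})=\varphi(u_i)$, and this is ruled out by the removal; the length-$2$ constraints (the cross-edge inequalities $\varphi(v_i)\neq\varphi(u_i)$ and $\varphi(v_{i+1})\neq\varphi(u_{i+1})$) follow from the same list reductions together with the non-repetitiveness of $P_T$ and $P_B$. For $H_L$ the sole candidate repetition is the length-$4$ pattern, which would demand $\varphi(v_1)=\varphi(p_{u_1})$ and is blocked by the removal from $p_{u_1}$'s list; $H_R$ is handled symmetrically.

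The main obstacle is the structural observation itself: once one sees that the pendants truncate every outer-face facial path at a hook, the rest is bookkeeping. It is precisely this blocking effect that confines the list reductions on each bottom vertex to at most two colours, so that $4=6-2$ colours remain per vertex and Corollary~\ref{GPZ} can finish the job.
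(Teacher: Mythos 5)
Your proposal is correct and follows essentially the same route as the paper's proof: colour the top path together with its two pendants non-repetitively via Corollary~\ref{GPZ}, delete at most two of the resulting colours from the list of each bottom vertex, colour the bottom path with its pendants from the reduced lists, and check the quadrangles and the two end ``hooks''. Your version only redistributes the end-vertex deletions slightly and spells out the verification (in particular the observation that pendants truncate every outer facial path) that the paper leaves as ``easy to see''.
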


\begin{proof}
Let $L$ be a graph as described above. Let $u_1$, $v_1$, $u_n$ and $v_n$ be the vertices of degree $2$ in $L_n$ and $v_1$, $v_2,\dots,$ $v_n$, $u_n$, $u_{n-1},\dots,$ $u_1$ - the sequence of vertices of $L_n$ in the order in which they appear on its outer cycle. Let the $4$ pending edges added to $L_n$ in order to construct $L$ be $u_0 u_1$, $v_0 v_1$, $u_n u_{n+1}$ and $v_n v_{n+1}$.
Colour the path on vertices $v_0$, $v_1,\dots,$ $v_{n+1}$ non-repetitively. For $i=0,1,\dots,n$ from the list of colours assigned to $u_i$ remove the colour of the vertex $v_{i}$ and $v_{i+1}$. Colour the path on vertices $u_0$, $u_1,\dots,$ $u_{n+1}$ non-repetitively with the remaining (at least $4$) colours in each of the lists.
It is easy to see that obtained colouring is facial non-repetitive.
\end{proof}

A {\em circular ladder graph} $D_n$ ({\em prism graph}) is a planar graph consisting of $2n$ vertices and $3n$ edges and it can be obtained from ladder graph $L_n$ by joining the four vertices of degree $2$ by two edges (as in the figure below). \\

\begin{figure}[h!]
\begin{center}
     \includegraphics[width=4.7cm]{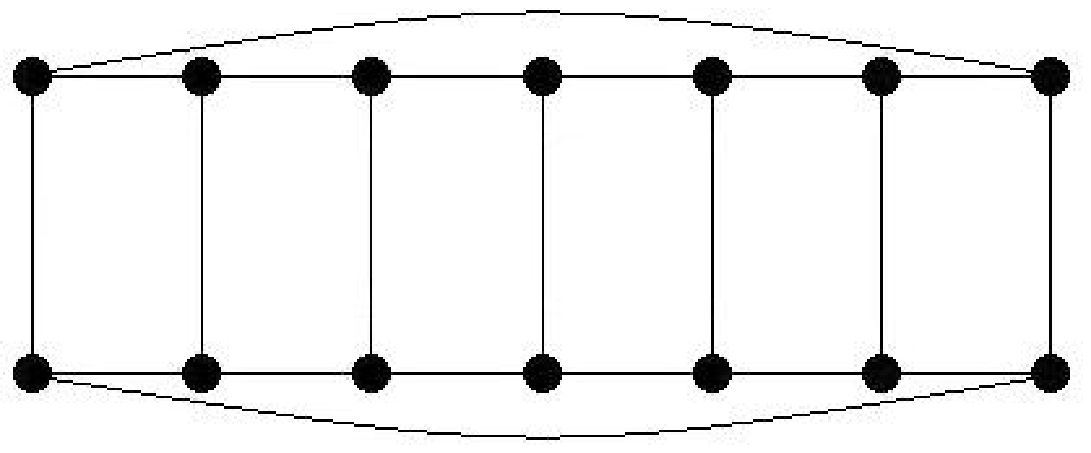}
    \caption{Circular ladder graph}
\end{center}
\end{figure}

\begin{Theorem}
For a prism $D_n$ on $2n$ vertices, $\pi_{fl} (D_n)\leq 8$.
\end{Theorem}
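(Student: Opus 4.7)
The plan is to colour the two $n$-cycles of $D_n$ in succession, applying Theorem~\ref{cycle} each time, and to control the $n$ quadrilateral faces by pruning the lists used at the second step. Label the vertex set so that the two disjoint $n$-cycles are $u_1u_2\cdots u_n$ and $v_1v_2\cdots v_n$, joined by the rungs $u_iv_i$; then the faces of $D_n$ are these two $n$-gons together with the quadrilaterals $F_i = u_iv_iv_{i+1}u_{i+1}$ (indices modulo $n$).

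First I would use Theorem~\ref{cycle} on the $v$-cycle to obtain a (facial) non-repetitive colouring $\varphi$ of $v_1,\ldots,v_n$ from the $8$-element lists (only $5$ colours per vertex are needed). Then for each $i$ I would remove the (at most three) colours $\varphi(v_{i-1})$, $\varphi(v_i)$, $\varphi(v_{i+1})$ from $u_i$'s list, leaving a list of at least $5$ colours, and apply Theorem~\ref{cycle} once more to colour the $u$-cycle non-repetitively from these reduced lists.

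To finish, one checks that the resulting $\varphi$ is facial non-repetitive on $D_n$. The two $n$-gonal faces are handled automatically by the two applications of Theorem~\ref{cycle}. On a quadrilateral face $F_i$, only period-$1$ and period-$2$ repetitions can arise: properness on every edge of $F_i$ follows from the non-repetitive cycle colourings and from the removal of $\varphi(v_i)$ from $u_i$'s list, whereas any $ABAB$ pattern on a four-vertex facial sub-path of $F_i$ — regardless of starting vertex or direction around the face — reduces to the single condition $\varphi(u_i)=\varphi(v_{i+1})$ and $\varphi(u_{i+1})=\varphi(v_i)$, which fails because $\varphi(v_{i+1})$ was deleted from $u_i$'s list.

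The main obstacle, modest though it is, lies in the case analysis for the quadrilateral faces: one must verify that each of the four length-$4$ facial sub-paths of $F_i$ (one for each missing edge) gives rise to the same pair of equalities, so that removing $\varphi(v_{i+1})$ — together with $\varphi(v_{i-1})$, which handles the neighbouring face $F_{i-1}$, and $\varphi(v_i)$, which handles properness of the rung — from $u_i$'s list is enough to rule them all out simultaneously.
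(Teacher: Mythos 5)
Your proof is correct, and it reaches the same bound $8$ by a noticeably different decomposition of the colour budget. The paper simply reuses its ladder colouring: it first gives two ``seed'' vertices ($u_n$ and $v_1$) unique colours, removes those from all other lists, colours the two remaining \emph{paths} non-repetitively via Corollary~\ref{GPZ} (so the budget is $2+4+2$), and then invokes Lemma~\ref{unique} to see that each $n$-gonal face (a unique vertex plus a non-repetitive path) is fine, with the quadrilaterals coming out as $ABCD$ or $ABAC$. You instead apply Theorem~\ref{cycle} directly to each $n$-cycle ($5$ colours for the $v$-cycle, then a $3$-colour pruning leaving $5$ for the $u$-cycle), which dispenses with the seed vertices and with Lemma~\ref{unique} altogether; a pleasant by-product is that your pruning of all three of $\varphi(v_{i-1}),\varphi(v_i),\varphi(v_{i+1})$ from $L(u_i)$ makes every quadrilateral face rainbow, so the case analysis you worry about at the end is actually even easier than you suggest --- all four colours on $F_i$ are pairwise distinct, and the $ABAB$ discussion becomes superfluous. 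Your verification that every $4$-vertex facial subpath of $F_i$ forces the same pair of equalities is nevertheless accurate, and the two $n$-gonal faces are indeed handled verbatim by Theorem~\ref{cycle} since their facial paths are exactly the paths along the two cycles. Neither route improves on the other in the constant; the paper's is shorter on the page only because it points back at the ladder proof.
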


\begin{proof}
We use the same colouring as in the previous proof. There are two $n$-gonal faces of $D_n$ - each consisting of a uniquely coloured vertex and a nonrepetitive path. Hence no one of these contains a repetitive path. Each of the 4-gonal faces has the same property as in the theorem above.
\end{proof}

An {\em apic graph} $A\sp{n}$ is a graph on $2n$ vertices, $n>2$, consisting of $2(n-1)$ $3$-gonal faces and one $2n$-gonal face that can be obtained from the ladder graph $L_n$ by adding one edge to each quadrangle so that no vertex in the resulting graph $A\sp{n}$ is of degree $>4$.

\begin{figure}[h!]
\begin{center}
      \includegraphics[width=5.7cm]{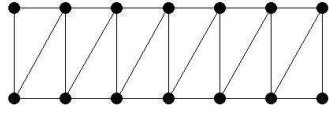}
    \caption{Apic graph}
\end{center}
\end{figure}

\begin{Theorem}
Let $A\sp{n}$ be an apic graph. Then $\pi_{fl} (A^n)\leq 8$.
\end{Theorem}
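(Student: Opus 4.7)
The plan is to mimic the colouring scheme used for the ladder graph $L_n$ in Theorem~\ref{ladder}, exploiting the fact that a triangular face is facially non-repetitive as soon as its three boundary edges have differently coloured endpoints — every facial path on a triangle has at most three vertices, so the only possible repetition is of the form $aa$. A key preliminary observation is that the ``no vertex of degree greater than $4$'' constraint forces all the diagonals in the quadrilaterals of the underlying $L_n$ to share a common orientation. Indeed, writing $v_1,\ldots,v_n$ for the top rail and $u_1,\ldots,u_n$ for the bottom rail of $L_n$, once the diagonal $u_1 v_2$ is chosen, $\deg(v_2)$ reaches $4$, so the next diagonal cannot be $v_2 u_3$ and must be $u_2 v_3$, and so on inductively. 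Therefore, by the symmetry swapping the two rails, we may assume that the diagonal of the $i$-th quadrangle is $u_i v_{i+1}$ for every $i\in\{1,2,\ldots,n-1\}$.

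Given lists of size at least $8$ at every vertex, I would then proceed exactly as in the proof of Theorem~\ref{ladder}. First, colour the antipodal corners $v_1$ and $u_n$ with two distinct colours $\alpha,\beta$ drawn from their lists and delete $\alpha,\beta$ from the lists of all remaining vertices. Next, colour the top rail $v_2,v_3,\ldots,v_n$ non-repetitively from the resulting lists of size at least $6$, which is possible by Corollary~\ref{GPZ}. Then, for each $i\in\{1,\ldots,n-1\}$, delete both $c(v_i)$ and $c(v_{i+1})$ from $L(u_i)$; since the four colours $\alpha,\beta,c(v_i),c(v_{i+1})$ are pairwise distinct, every $L(u_i)$ still contains at least $4$ admissible colours. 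A final application of Corollary~\ref{GPZ} yields a non-repetitive colouring of the bottom rail $u_1,u_2,\ldots,u_{n-1}$.

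The verification that this colouring is facially non-repetitive splits into two parts. For the outer $2n$-gonal face the argument is identical to that of Theorem~\ref{ladder}: since $\alpha$ and $\beta$ are unique in $A^n$, no facial repetition on the outer cycle can meet either $v_1$ or $u_n$, and every sub-path avoiding both corners lies inside $v_2\ldots v_n$ or $u_1\ldots u_{n-1}$, both non-repetitively coloured (a fact that can be formalised via Corollary~\ref{2}). For the pair of triangular faces $u_i v_i v_{i+1}$ and $u_i v_{i+1} u_{i+1}$, the deletions force $c(u_i)\notin\{c(v_i),c(v_{i+1})\}$ and $c(u_{i+1})\neq c(v_{i+1})$, the non-repetitive rails give $c(v_i)\neq c(v_{i+1})$ and $c(u_i)\neq c(u_{i+1})$, and the uniqueness of $\beta$ covers the boundary case involving $u_n$. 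Hence every boundary edge of every triangle has differently coloured endpoints and carries no repetition.

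The main delicate point of the argument is the preliminary observation on the diagonal orientation: if the diagonals were allowed to alternate, the asymmetric deletion scheme placed on the $L(u_i)$'s would fail to forbid the diagonals of the opposite type, and a separate argument (or a larger list-size bound) would be required. Once that orientation has been fixed, the rest is a routine extension of the ladder proof with no new ingredients.
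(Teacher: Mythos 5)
Your proposal is correct and follows essentially the same route as the paper: colour the two degree-$2$ corners with unique colours, colour one rail non-repetitively, delete the (at most two) neighbouring rail-colours from each $u_i$'s list, colour the other rail non-repetitively, and conclude via Lemma~\ref{1}/Corollary~\ref{2} for the outer face and rainbow triangles for the $3$-gonal faces; your preliminary observation that the degree condition forces all diagonals to share one orientation is a point the paper leaves implicit, and you handle it more carefully. (The only trivial slip is the claim that $\alpha,\beta,c(v_i),c(v_{i+1})$ are always pairwise distinct — for $i=1$ one has $c(v_1)=\alpha$ — but this only means fewer colours are removed, so the bound of $4$ remaining colours still holds.)
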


\begin{proof}
Let $A\sp{n}$ be a graph as described above and let lists of colours of lengths at least $8$ be assigned to all its vertices.
Let $v_1$ and $u_n$ be the vertices of degree $2$ in $A^n$ and $v_1$, $v_2,\dots,$ $v_n$, $u_n$, $u_{n-1},\dots,$ $u_1$ the sequence of vertices of $A^n$ in the order in which they appear on its outer cycle. Colour $u_n$ and $v_1$ with two different colours from their lists. Remove these colours from the lists of all other vertices of $A^n$. Colour the path on vertices $v_2$, $v_3,\dots,$ $v_n$ non-repetitively. For $i=2,3,\dots,n-1$, from the list of colours assigned to $u_i$ remove the colour of the vertex $v_{i}$ and $v_{i-1}$.
Colour the path on vertices $u_1$, $u_2,\dots,$ $u_{n-1}$ non-repetitively with the remaining (at least $4$) colours in each of the lists. By Lemma~\ref{1} and Theorem~\ref{cycle} the outer cycle is coloured non-repetitively. As $3$-gonal faces are also coloured with $3$ different colours, the obtained colouring is facial  non-repetitive.
\end{proof}

An {\em $n$-sided antiprism graph} $A_n$ on $2n$ vertices and $4n$ edges is a graph of polyhedron composed of two parallel copies of some particular $n$-sided polygon, connected by an alternating band of triangles. For $n>4$ it can be also obtained from circular ladder graph $D_n$ by adding one edge to each quadrangle so that no vertex in the resulting graph $A_{n}$ is of degree $>4$.

\begin{figure}[h!]
\begin{center}
    \includegraphics[width=4.7cm]{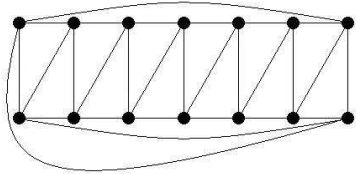}
    \caption{Graph of $n$-sided antiprism}
\end{center}
\end{figure}

\begin{Theorem}
For an antiprism $A_n$ on $2n$ vertices, $\pi_{fl} (A_n)\leq 7$.
\end{Theorem}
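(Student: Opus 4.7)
My plan is to color the two $n$-cycles of the antiprism independently, using strategic list reductions on the inner cycle to ensure that every triangular face receives three pairwise distinct colors. I would label the outer $n$-cycle as $v_1, v_2, \ldots, v_n$ and the inner $n$-cycle as $u_1, u_2, \ldots, u_n$ (indices modulo $n$) so that the triangular faces of $A_n$ are exactly the triangles $\{v_i, u_i, u_{i+1}\}$ and $\{v_i, v_{i+1}, u_{i+1}\}$; in particular, each inner vertex $u_j$ has precisely two outer neighbors, namely $v_{j-1}$ and $v_j$.

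Suppose each vertex is assigned a list of at least $7$ admissible colors. First I would apply Theorem~\ref{cycle} to the outer cycle, producing a facial non-repetitive coloring of $v_1, \ldots, v_n$ from the given lists (which have size at least $7 \geq 5$). Next, for each inner vertex $u_j$, I would shrink its list by removing the two colors just assigned to its outer neighbors $v_{j-1}$ and $v_j$, leaving a list of size at least $7 - 2 = 5$. Then I would invoke Theorem~\ref{cycle} once more to obtain a facial non-repetitive coloring of the inner cycle $u_1, \ldots, u_n$ from these reduced lists.

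It then remains to verify that every face of $A_n$ is coloured facial non-repetitively. The two $n$-gonal faces are handled by the non-repetitive colorings of the cycles. For any triangular face, all three vertices receive pairwise distinct colors: the two inner-to-outer inequalities come from the list reductions in the second step, while the inequality between the two vertices lying on the same cycle follows from the non-repetitiveness (hence properness along the face boundary) of that cycle's coloring. Because a facial path on a triangle contains at most three vertices and any repetition has length at least two, it is enough to rule out adjacent vertices of the same color, which we have done.

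The main obstacle is quite mild, but it is precisely the reason the bound $7$ is attainable whereas the circular ladder $D_n$ requires $8$: the antiprism's two $n$-gonal faces are vertex-disjoint, and the faces separating them are triangles rather than quadrilaterals. Since triangles impose only adjacency constraints (no length-$4$ repetitions are possible with only three vertices), one color is saved compared to the quadrilateral case, where patterns of the form $ABAB$ must be explicitly avoided and additional list-shrinking would be required.
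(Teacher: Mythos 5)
Your proposal is correct and follows essentially the same route as the paper: colour the outer cycle non-repetitively via Theorem~\ref{cycle}, delete from each inner vertex's list the colours of its two outer neighbours (leaving lists of size at least $5$), colour the inner cycle via Theorem~\ref{cycle} again, and observe that the two $n$-gonal faces are non-repetitive while every triangular face is rainbow. Your extra remarks on why triangles only impose adjacency constraints are a slightly more explicit version of the paper's closing sentence, not a different argument.
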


\begin{proof}
Let $A_n$ be an antiprism graph where to each of the vertices is assigned a list of colours of length at least $7$.
Let $v_1$, $v_2,\dots,$ $v_n$ be the vertices of the outer cycle of $A_n$ and $u_1$, $u_2,\dots,$ $u_n$ be the vertices of inner cycle of $A_n$, such that $u_1$ is adjacent to $v_1$ and $v_n$ and $u_i$ is adjacent to $v_i$ and $v_{i-1}$ for $i=2,3,\dots,n$. Colour the cycle $v_1,...,v_n$ non-repetitively using colours from the corresponding lists. For every vertex $u_i$ remove the colours of the two adjacent $v$-vertices from its list and colour the cycle $u_1,...,u_n$ non-repetitively using colours from the remaining lists (of length at least 5). In this way, the two $n$-gonal faces are coloured non-repetitively and all triangle faces are rainbow.
\end{proof}

\section{Concluding remarks}

Note that the bound in Theorem \ref{mainresult} is `somewhere in between' the bounds from Theorems~\ref{DujmobicetalTh} and~\ref{BaratCzapTh}, cf. (\ref{3piEq}), and is quite away from the constant upper bound for the non-list version of the same problem from Theorem~\ref{BaratCzapTh}.
We believe however that the following statement is true and pose it as a conjecture (verified already for several classes of graphs in Section~\ref{families_section} above).

\begin{Conjecture}
There exists a constant $C$ such that $\pi_{fl}(G)\le C$ for all plane graphs $G$.
\end{Conjecture}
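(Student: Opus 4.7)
The plan is to push the entropy compression framework of Theorem~\ref{mainresult} further and remove the linear dependence on $\Delta$ by exploiting the planar structure more aggressively in the encoding of the record $R$. Recall that the bottleneck in the proof of Theorem~\ref{mainresult} is the range $[1,2\Delta]$ of the ``orientation'' parameter $o$, forced by the fact that a vertex $v$ may be incident to up to $\Delta$ distinct facial paths of each prescribed length through $v$. If one could replace this range by an absolute constant, the characteristic equation analogous to~(\ref{recurrenceequation}) would have its largest root bounded independently of $\Delta$, yielding via~(\ref{suminequality1}) and~(\ref{suminequality2}) a constant upper bound on the required list size $l$.

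First I would modify the randomized algorithm so that at each step the ``resample'' operation acts with respect to a single, canonically chosen face rather than an arbitrary facial path. Concretely, one could process the faces of $G$ in some fixed ordering (say, a BFS in the dual graph from the outer face) and at each point colour a next uncoloured vertex along the ``current'' facial boundary. The record would then store, instead of the triple $(h,q,o)$, a triple $(h,q,\sigma)$ where $\sigma$ distinguishes only among a constant number of options, such as the direction along the current face or whether the repetition spills into the already processed region. Combined with Observation~\ref{key-one-to-one-obs}, this should translate into a counting inequality whose base is independent of $\Delta$.

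A complementary strategy is to lift the structural proof of Theorem~\ref{BaratCzapTh} by Bar\'at and Czap into the list setting. Their bound of $24$ appears to rely on decomposing a plane graph into pieces (e.g.\ by layers according to distance from the outer face) and colouring each piece from a bounded palette in a way that is consistent across shared boundaries. The list analogue would assign, to every such piece, a ``private'' pool of colour slots within each list so that Lemma~\ref{1} and Corollary~\ref{2} can stitch the pieces together; the families treated in Section~\ref{families_section}---paths, cycles, ladders, prisms, antiprisms---already illustrate this template in elementary cases. One could also feed the pieces output by such a decomposition into the modified entropy compression scheme sketched in the previous paragraph, possibly giving a unified argument.

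The main obstacle, and presumably the reason the conjecture is still open, is to actually reduce the orientation parameter $o$ to $O(1)$ while preserving the one-to-one correspondence of Observation~\ref{key-one-to-one-obs}. Given only constantly many bits of local information together with the partial colouring and all earlier record entries, one must be able to reconstruct the ``bad'' facial path that triggered the uncolouring, even though a single vertex may belong to unboundedly many faces. I expect that a successful proof will require a structural reduction---perhaps to $3$-connected or maximally planar graphs, where the embedding and the facial structure are much more rigid---combined with an entropy compression argument tailored to that reduction. Without such a reduction, the $\Delta$-factor present in Theorem~\ref{mainresult} seems to be an intrinsic barrier of the direct approach.
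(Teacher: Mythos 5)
This statement is posed in the paper as an open \emph{conjecture}, not a theorem: the authors explicitly state that whether $\pi_{fl}$ of plane graphs is bounded by a constant is still an open question, and they offer no proof (only the supporting evidence of the constant bounds for special families in Section~\ref{families_section}). So there is no proof in the paper to compare yours against, and, more importantly, what you have written is not a proof either. Both of your strategies are left entirely as programmes: you do not exhibit a modified algorithm with a provably constant-range orientation parameter, you do not verify that any such modification preserves the injectivity of Observation~\ref{key-one-to-one-obs}, and you do not carry out the lifting of the Bar\'at--Czap argument to the list setting. You yourself concede in the final paragraph that the central difficulty---reconstructing the bad facial path from $O(1)$ bits of local data when a vertex can lie on unboundedly many faces---remains unresolved. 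A correct identification of the bottleneck is not a resolution of it.

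That said, your diagnosis of where the $\Delta$-dependence enters is accurate: it is precisely the factor $2\Delta$ in the range of $o$, which propagates into the coefficient $2\Delta-3$ of the characteristic equation and forces the dominant root (and hence $l$) to grow like $\sqrt{\Delta}$ squared, i.e.\ linearly in $\Delta$. Your second strategy also runs into a known warning sign flagged in the paper itself: the list analogue of a structural decomposition argument can fail badly, since by the result of Fiorenzi, Ochem, Ossona de Mendez and Zhu the non-facial list parameter $\pi_{\rm ch}$ is already unbounded on trees, so any ``private colour pool'' scheme must exploit the facial restriction in an essential way that you have not specified. Treat your text as a research plan toward the conjecture, not as a proof of it.
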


We recall once more that by the result from \cite{MendezZhu}, a similar conjecture but for $\pi_{\rm ch}$ is far from being true,
as it would be false already for trees. On the other hand, if we consider edge colourings instead of the vertex ones,
then the parameters corresponding to $\pi_{f}(G)$ and $\pi_{fl}(G)$ are denoted by $\pi'_{f}(G)$ and $\pi'_{fl}(G)$
and called the \emph{facial Thue chromatic index} and
the {\em facial Thue choice index}, respectively, see \cite{HJSS09} and \cite{JSES11} for details.
It is known that $\pi'_{f}(G)\leq 8$ (\cite{HJSS09}) and $\pi'_{fl}(G)\leq 12$ (\cite{JP12}), and it is believed that the both parameters are the same for every plane graph $G$, see \cite{JSES11}. However, quite frequently the edge colouring and vertex colouring parameters `tend to behave differently' in list settings.

\end{document}